\xpatchcmd{\proof}{\itshape}{\proofnamefont}{}{}
\newcommand{\proofnamefont}{\bfseries}
\title[Behavior of Gordian graphs at infinity]{Behavior of Gordian graphs at infinity}
\author{Alexey Yu. Miller}
\thanks{The work is supported by Ministry of Science and Higher Education of the Russian Federation, agreement № 075–15–2019–1620}
\address{Alexey Yu. Miller\\
St.\,Petersburg Department of 
Steklov Institute of Mathematics\\
St.~Petersburg State University}
\email{miller.m2@mail.ru}
\newcommand \sm {\setminus}
\newcommand \be     {\begin{equation}}
\newcommand \ee     {\end{equation}}
\DeclareMathOperator{\G}{G}
\DeclareMathOperator{\un}{u}
\DeclareMathOperator{\bL}{\mathcal{L}}   
\DeclareMathOperator{\bK}{\mathcal{K}}   
\DeclareMathOperator{\dt}{d}
\DeclareMathOperator{\X}{X}
\DeclareMathOperator{\F}{F}
\DeclareMathOperator{\N}{N}
\DeclareMathOperator{\Mod}{mod}
\DeclareMathOperator{\Ll}{L}
\DeclareMathOperator{\U}{U}
\DeclareMathOperator{\e}{e}
\DeclareMathOperator{\Int}{Int}
\DeclareMathOperator{\Sf}{S}
\DeclareMathOperator{\B}{B}
\DeclareMathOperator{\UCC}{UCC}
\DeclareMathOperator{\Hm}{H}
\DeclareMathOperator{\C}{\mathcal{C}}
\DeclareMathOperator{\ff}{f}
\DeclareMathOperator{\Cl}{Cl}
\DeclareMathOperator{\BI}{BI}
\DeclareMathOperator{\FI}{FI}
\DeclareMathOperator{\FU}{FU}
\DeclareMathOperator{\BU}{BU}
\DeclareMathOperator{\Vrt}{Vert}
\DeclareMathOperator{\ICC}{ICC}
\DeclareMathOperator{\Cm}{C}
\DeclareMathOperator{\PF}{PF}
\DeclareMathOperator{\PB}{PB}
\newcommand{\ti}[1]{{{\bf#1}}} 
\newcommand{\ci}[1]{{{\accentset{\circ}#1}}} 
\newcommand{\ra}[2]{\langle\sfrac{#1}{#2}\rangle}
\newtheorem{thm}{Theorem}
\newtheorem{lemma}{Lemma}
\theoremstyle{definition}
\newtheorem{definition}{Definition}
\theoremstyle{remark}
\newtheorem*{remark}{Remark}
\newtheoremstyle{namedlem}{}{}{\itshape}{}{\bfseries}{.}{.5em}{\thmnote{#3}#1}
\theoremstyle{namedlem}
\newtheorem*{namedlem}{}
\begin{document}
\maketitle
\begin{abstract}
The present paper refers to the knot theory and is devoted to the study of global properties of Gordian graphs of various local moves. In~2005, Gambaudo and Ghys raised the question of the behavior at infinity of the crossing change Gordian graph. They proposed studying its “ends”, that is, unbounded connected components of complements of bounded subsets. We provide a complete description of the behavior at infinity for local moves from three well-known infinite families, namely, rational moves, $\Cm(n)$-moves, and~$\Hm(n)$-moves (note that each of the first two families contains the crossing change). Also, in 2005, March\'e gave a different perspective on the behavior of Gordian graphs at infinity, proposing to consider complements of finite subsets. We describe the behavior at infinity in this sense for all local moves with the infinite neighborhood of the unknot in the corresponding Gordian graph.
\end{abstract} 
\maketitle

\section*{Introduction}

The present paper refers to the classical theory of knots and is devoted to the study of knot transformations. A knot and link transformation is any geometric procedure that transforms a given link into some new link, possibly the same. The most well-studied class of knot transformations is the class of local moves. A local move is such a knot transformation that is represented as a local removal of one tangle and replacing it with another tangle (see~\cite{Kaw96, NakC05} and a detailed definition below). Classical examples of local transformations are the crossing change (see Figure~\ref{Fig01} and~\cite{Wen37, Sch85, BCJTT2017}), band surgery (see~\cite{K10, K11, KM09}), $\Delta$-move (see~\cite{MN89, O90, Hor08}), $n$-move (see~\cite{Prz88, DP02, MWY19}), and many others. Local moves are the main object of our study. The main method for studying knot transformations is to study their Gordian graphs.

\begin{figure}[H]
\center{\includegraphics[width=0.5\textwidth]{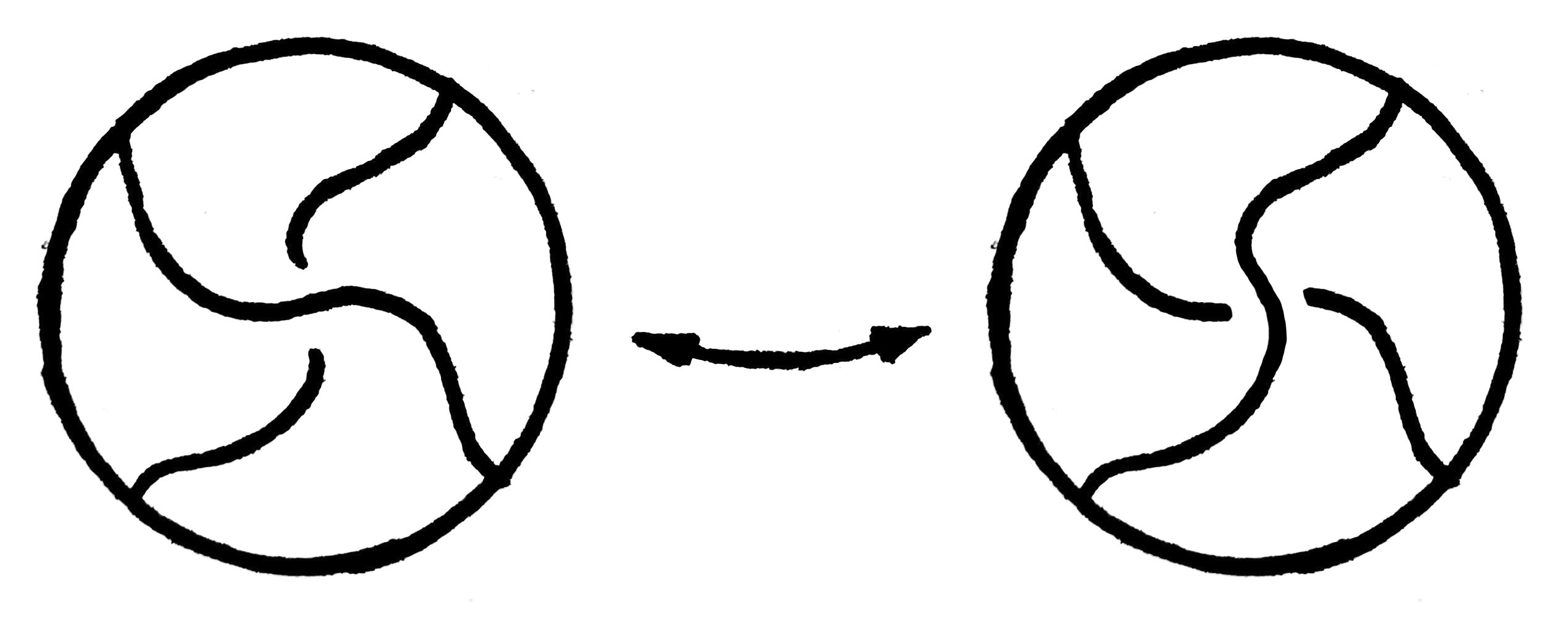}}
\caption{\ti{Crossing change}} 
\label{Fig01}
\end{figure}

\medskip
Let $\delta$ be a knot transformation, and let $M$ be a set of links. Then we denote by~$\G(\delta,M)$ such a graph whose vertex set is in one-to-one correspondence with~$M$, and two vertices are connected by an edge if and only if the corresponding links are obtained from each other (sic!) by a single application of $\delta$. The graph $\G(\delta,M)$ is called the Gordian graph for $\delta$ and $M$. We consider this graph with a natural metric. The distance between two vertices in this metric is the number of edges in any shortest path connecting these vertices if they are in the same connected component. Further, we neglect the difference between a vertex of the Gordian graph and its corresponding link, identifying these objects and perceiving them as a single object, but nevertheless using both of these words to refer to it.

\medskip
The classic questions about the structure of a Gordian graph are the following questions. What is the structure of a unit sphere centered at the unknot in the Gordian graph for some transformation (see~\cite{Kon79, Sto03, Gre09, KM86, Ba14, K11, KM09, Uch00, Mcc17, Tho89,  Lin96, OTY06, BGS13,  Miy98})? What is the structure of a unit sphere centered at an arbitrary knot in the Gordian graph for some transformation (see~\cite{BCJTT2017, BKMMF21, O06, ZYL17, ZY18})? What distance properties exist in the Gordian graph for some transformation (see~\cite{Kaw12, HO12, O90, Nak12, Ze15, Wen37, Ble84, Yam82, Yam86, Nak05, AHH12, Sto04})? 
The properties studied in these and many other (see~\cite{DP02, DIP07, MV20, JLS12, K10, K12, K16}) works we call local.
By local we mean those transformation properties that can be described without using Gordian graphs. In the case of local transformation properties, Gordian graphs are used rather as a visualization tool. However, not all transformation properties are local. Many transformation properties are difficult to describe without using the Gordian graph as a structure on the set of all links. For example, the hyperbolicity of the Gordian graph (see~\cite{JaLiMo22, IJ11, FMS21}), the existence of a metric filtration on the Gordian graph (see~\cite{BK20}), the existence of special complete subgraphs (see the theory of Gordian complexes,~\cite{HU02, NO09, NO06, NakC05}), the non-trivial geometric structure (see \cite{Hor08, HO14, HO13, Baa06, Y00}). We call such properties global properties.

\medskip
In 2005, Gambaudo and Ghys found another global property of $\X$-move, see~\cite[Theorem C]{GG05}. They proved that for every integer \mbox{$d\ge1$}, there is a map $$\mu\colon \mathbb{Z}^d\rightarrow\G(\X,\bK)$$ which
is a quasi-isometry onto its image. This result was obtained as part of the study of the $\X$-move Gordian graph global geometric structure and, in particular, the behavior of this graph at infinity. In addition, Gambaudo and Ghys presented some open questions (see~\cite[p.~547]{GG05}) related to this line of investigation. In particular, they propose to study the space of "ends" of $\G(\X,\bK)$, that is, consider unbounded connected components of the complements of large balls in $\G(\X,\bK)$.

\medskip
In this paper, we present a solution to this problem. In fact, we give a more general result. We describe "ends" for knot transformations from three well-known infinite families (two of which contain $\X$-move). Let us first explain how we formalized the notion of "ends". We introduce four indicators of graph behavior at infinity (in this paper all graphs are assumed to be finite or countable). Let~$G$ be a connected graph, then we define the elements $\BU(G)$, $\BI(G)$, $\FU(G)$, and~$\FI(G)$ of the set \mbox{$\{0\}\cup\mathbb{N}\cup\{\infty\}$} as follows
$$\BU(G)=\sup_{V\,{ \in }\,{\PB}(G)} \UCC\Big(G\sm V\Big),\,\,\,\,\BI(G)=\sup_{V\,{ \in }\,{\PB}(G)} \ICC\Big(G\sm V\Big),$$
$$\FU(G)=\sup_{V\,{ \in }\,{\PF}(G)} \UCC\Big(G\sm V\Big),\,\,\,\,\FI(G)=\sup_{V\,{ \in }\,{\PF}(G)} \ICC\Big(G\sm V\Big),$$
where $\UCC(G\sm V)$ is the cardinality~of the set of unbounded con\-nec\-ted components of $G\sm V$, \mbox{$\ICC(G\sm V)$} is the cardinality of the set of infinite connected components of $G\sm V$, ${\PF}(G)$ is the set of all finite subsets of $\Vrt(G)$, and ${\PB}(G)$ is the set of all bounded (as subsets of $G$) subsets of $\Vrt(G)$, and $G\sm V$ is the graph obtained by removing from $G$ the set of vertices $V$ and all edges adjacent to these vertices. We can extend our four definitions to the case of graphs with more than one connected component taking as a value the sum of the corresponding values over all connected components. Let $G$ be a graph, then we say that $\BU(G)$ is the number of \mbox{$\BU$-ends}, $\BI(G)$ is the number of $\BI$-ends, $\FU(G)$~is the number of $\FU$-ends, and $\FI(G)$ is the number of $\FI$-ends of $G$. It is easy to see that the "ends" from Gambaudo and Ghys's question are $\BU$-ends.

\medskip
We describe the $\BU$-ends of rational moves (see Definition~\ref{rationalmoves}, and \cite{DS00}) by the following theorem:
\begin{thm}[$\BU$-Ends of rational moves]
\label{thm1}
For any rational move $\delta$, the number of $\BU$-ends of each connected component of $\G(\delta,\bK)$ is equal to one.
\end{thm}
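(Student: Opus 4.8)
The plan is to isolate a single numerical ``complexity'' that grows without bound along the connected-sum direction and changes only by a bounded amount along each edge, and then to use connected sum to funnel every far-away knot into one common direction at infinity. The quantitative input I will rely on is that each application of a rational move $\delta$ can be realized by at most a fixed number $C_\delta$ of crossing changes: to keep $\delta$ inside $\bK$ the two exchanged rational tangles $T_1,T_2$ must have the same endpoint connectivity, and two rational tangles of the same connectivity are joined, inside the ball, by at most $\crn(T_1)+\crn(T_2)$ crossing changes. Given this, I will use the Seifert genus $g\colon\bK\to\mathbb Z_{\ge0}$, which is additive under connected sum, nonnegative, unbounded, and, by the key input, satisfies $|g(K)-g(K')|\le C_\delta$ whenever $K,K'$ are $\delta$-neighbors. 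In particular $\dist(K,O)\ge\big(g(K)-g(O)\big)/C_\delta$, where $O$ is the unknot, so large genus forces large distance.

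Two connected-sum facts do the geometric work. First, $\delta$ is supported in a ball, so if $A,A'$ are $\delta$-neighbors then so are $A\#L,\,A'\#L$ for every fixed $L$; unknotting $L$ inside its summand then gives $\dist(A\#L,A)\le\dist(O,L)$, and applying $\#L$ to a path shows it maps each connected component $\Gamma$ into itself. Second, fix a nontrivial knot $J$ and consider $K,\,K\#J,\,K\#J^{\#2},\dots$: consecutive terms are joined by paths of length $\dist(O,J)$ inside $\Gamma$, while $g(K\#J^{\#n})=g(K)+n\,g(J)\to\infty$. Hence $\dist(K\#J^{\#n},O)\to\infty$, every component $\Gamma$ is unbounded, and $\UCC(\Gamma\sm V)\ge1$ for each bounded $V$; this already delivers at least one $\BU$-end.

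The heart of the matter is the reverse inequality: deleting any bounded set leaves only one unbounded component. Fix $V\in\PB(\Gamma)$ with $V\subseteq B(x_0,R)$, so that $g\le M$ on $V$ for $M:=g(x_0)+C_\delta R$; set $k_0:=\lfloor M/g(J)\rfloor+1$ and $R_0:=R+(k_0+1)\dist(O,J)$. For a vertex $P$ with $\dist(x_0,P)>R_0$ I follow the ray $P\rightsquigarrow P\#J^{\#n}$ with $n$ large: a vertex $Q$ into which fewer than $k_0$ copies of $J$ have been built satisfies $\dist(P,Q)\le k_0\,\dist(O,J)$, hence $\dist(x_0,Q)>R$ and $Q\notin V$, while every later $Q$ carries at least $k_0$ copies of $J$, so $g(Q)\ge k_0\,g(J)>M$ and again $Q\notin V$; thus the ray avoids $V$ and ends in $\{g>M\}$. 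The same device shows $\{g>M\}$ is connected in $\Gamma\sm V$: two of its vertices $P_1,P_2$ are joined by $P_1\rightsquigarrow P_1\#J^{\#n}\rightsquigarrow P_2\#J^{\#n}\rightsquigarrow P_2$, where the middle stretch is the $\#J^{\#n}$-image of a path $P_1\rightsquigarrow P_2$ in $\Gamma$ and has genus $>M$ once $n$ is large, and the two rays never drop below $g(P_i)>M$. Therefore every vertex at distance $>R_0$ from $x_0$ lies in one and the same component of $\Gamma\sm V$, every other component is contained in $B(x_0,R_0)$ and hence bounded, so $\UCC(\Gamma\sm V)=1$. Taking the supremum over bounded $V$ yields $\BU(\Gamma)=1$.

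The main obstacle is the quantitative key input $|g(K)-g(K')|\le C_\delta$ along edges, i.e.\ that a rational move on knots is carried out by boundedly many crossing changes; this is what couples the combinatorics of $\G(\delta,\bK)$ to the geometry of $\delta$, and the rest is bookkeeping with the two connected-sum facts above. Two points deserve care when writing this out: that $\delta$ genuinely applies to every knot, so that no component degenerates to a bounded piece (otherwise it would contribute no end), and the uniformity of $C_\delta$ across all edges. Should the Seifert genus prove awkward for some move, I would substitute the signature $\sig$, which is additive, bounded-variation under $\delta$ by a cobordism argument, and unbounded once one sums with a fixed knot $J$ of nonzero signature; the same funneling argument then goes through with $\sig$ in place of $g$.
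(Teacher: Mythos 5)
Your overall architecture is exactly the one the paper uses (a complexity function that is additive under connected sum and Lipschitz along edges, plus shifting of paths by connected summands to funnel all distant vertices into a single end), but your choice of complexity function breaks the argument. The central quantitative claim --- that $|g(K)-g(K')|\le C_\delta$ whenever $K,K'$ are $\delta$-neighbors, deduced from ``a rational move is a bounded number of crossing changes'' --- is false already for $\delta=\X$ (the $\ra{2}{1}$-move, $C_\delta=1$): by Kondo's theorem \cite{Kon79}, every Alexander polynomial is realized by a knot of unknotting number one, and since $2g(K)\ge\deg\Delta_K$, there are knots of arbitrarily large Seifert genus at distance $1$ from the unknot. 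A crossing change does not move the genus by a bounded amount, so your inequality $\dist(K,O)\ge (g(K)-g(O))/C_\delta$ fails and the funneling argument collapses. A second, independent error is the assertion that the two exchanged tangles must have the same endpoint connectivity for the move to act on $\bK$: the $\ra{1}{0}$-move ($=\Hm(2)$-move) exchanges $[0]$ and $[\infty]$, which have different connectivities, yet takes knots to knots; such a move is not a composition of crossing changes inside the ball at all.

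Your fallback invariant, the signature, is much closer to what actually works: it is additive under connected sum and has bounded variation under a rational move (via the surgery/cobordism description of the move). But then the remaining --- and in fact the main --- task is to exhibit, for \emph{every} rational move, a knot $J$ in the unknot's component with $\sig(J)\ne 0$, and this is not automatic: nontrivial $2$-bridge knots such as the figure-eight have zero signature, so ``any nontrivial neighbor of $\U$'' does not suffice. This existence step is precisely where the paper's proof does its case analysis: it takes as invariant $\e_2(K)$, the minimal number of generators of ${\bf H}_1(\Sigma_2(K))$, whose Lipschitz property along edges follows from the Montesinos trick (the double branched cover changes by a single Dehn surgery under a rational move), and then constructs, for each fraction $p/q$, an explicit $2$-bridge closure $K_{\sfrac{p}{q}}\in\Sf_1^{\ra{p}{q}}(\U)$ with $\e_2(K_{\sfrac{p}{q}})=1$, splitting into cases on the signs and parities of $p$ and $q$. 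Without a correct Lipschitz invariant and without that existence argument, your proof has a genuine gap.
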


We describe the $\BU$-ends of $\Cm(n)$-moves (see Definition~\ref{cnmoves}, and \cite{OY08}) by the following theorem:
\begin{thm}[$\BU$-Ends of $\Cm(n)$-moves]
\label{thm2}
For any $n\in \mathbb{N}$, the number of $\BU$-ends of each connected component of $\G(\Cm(n),\bK)$ is equal to one.
\end{thm}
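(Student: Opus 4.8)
\medskip
The plan is to reduce the statement to two facts about each connected component $\mathcal{C}$ of $\G(\Cm(n),\bK)$: that $\mathcal{C}$ is unbounded (so that at least one unbounded component survives the removal of any bounded set), and that $\mathcal{C}$ is \emph{connected at infinity} (so that at most one does). Fix a base vertex $K_*\in\mathcal{C}$ and write $d(K)=\dist(K_*,K)$. Any bounded $V\subseteq\Vrt(\mathcal{C})$ has finite diameter, hence $V\subseteq\{K:d(K)\le R\}$ for some $R$; so it suffices to understand how the move interacts with the sublevel sets of $d$. The engine of both facts will be connected sum with a fixed knot, together with a Vassiliev invariant that detects $\Cm(n)$-distance.

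\medskip
Three ingredients carry the argument. First, additivity of the move: a single $\Cm(n)$-move performed inside a connected summand turns $K\,\#\,H$ into $K'\,\#\,H$, so any path $A=L_0,\dots,L_k=B$ in $\mathcal{C}$ yields a path $L_0\#H,\dots,L_k\#H$ of the same length for every knot $H$. Second, the existence of a fixed knot $J$ that is $\Cm(n)$-equivalent to the unknot yet carries a nonzero \emph{primitive} (connected-sum-additive) Vassiliev invariant $v$ of degree $n$; such a $J$ exists because the group of knots $\Cm(n)$-equivalent to the unknot, taken modulo $\Cm(n+1)$-equivalence, is nontrivial and is separated by degree-$n$ invariants (Habiro--Goussarov clasper theory). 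Because $J$ is $\Cm(n)$-equivalent to the unknot, every knot $K\#J^{\#m}$ lies in the same component as $K$. Third, a single $\Cm(n)$-move changes $v$ by a bounded amount, so
\[
d(K)\ \ge\ \frac{|v(K)-v(K_*)|}{C}
\]
for a constant $C$. Additivity gives $v(K\#J^{\#m})=v(K)+m\,v(J)$, whence $d(K_*\#J^{\#m})\to\infty$; this already shows that each component is unbounded.

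\medskip
For connectivity at infinity, fix a bounded $V\subseteq\{d\le R\}$ and suppose, for contradiction, that $\mathcal{C}\sm V$ has two unbounded components $P\ni A$ and $Q\ni B$. Choose any path $A=L_0,\dots,L_k=B$ in $\mathcal{C}$ and inflate it: for $m$ large the displayed bound forces $d(L_i\#J^{\#m})>R$ for all $i$ simultaneously, so by additivity of the move the inflated path from $A\#J^{\#m}$ to $B\#J^{\#m}$ avoids $V$ entirely and hence stays in a single component of $\mathcal{C}\sm V$. If, in addition, $A$ can be joined to $A\#J^{\#m}$ and $B$ to $B\#J^{\#m}$ by paths lying in $\{d>R\}$, then concatenation produces a path from $A$ to $B$ inside $\mathcal{C}\sm V$, forcing $P=Q$, a contradiction. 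Combined with the previous paragraph, this yields exactly one $\BU$-end.

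\medskip
The main obstacle is precisely the ascent just invoked: connecting a far-out vertex $A$ to its inflation $A\#J^{\#m}$ by a path that never re-enters the forbidden ball $\{d\le R\}$. The natural path interpolates the unknot up to $J^{\#m}$ inside a fresh summand, but for the early, simple intermediate knots the value of $d$ on $A\#(\text{intermediate})$ is not controlled and may dip below $R$. I expect to overcome this by choosing the interpolation so that $v$ increases monotonically along it---after replacing $J$ by its mirror image if necessary to fix the sign of $v(J)$---and by selecting $A$ and $B$ deep in the $v$-direction within their components, so that the lower bound on $d$ keeps the whole ascent outside $\{d\le R\}$. Making this monotone escape uniform over all bounded obstacles is the delicate point on which the whole argument rests.
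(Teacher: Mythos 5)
Your overall architecture---shift paths by connected sum with a knot that is $\Cm(n)$-equivalent to the unknot, bound the Gordian distance from below by a connected-sum-additive invariant, and use that bound to push a connecting path outside any bounded set---is essentially the paper's (its Path Shifting Lemma, Lower Estimates Lemma, and Basic Lemma), though you work with a degree-$n$ Vassiliev invariant where the paper uses $\e_2$, the minimal number of generators of ${\bf H}_1$ of the double branched cover, certified nontrivial on an explicit family $K_n\in\Sf_1^{\Cm(n)}(\U)$ via a Conway-polynomial computation of $\Delta_{K_n}(-1)$. The genuine gap is exactly where you locate it, and your proposed repair does not close it. To rule out two unbounded components $P$ and $Q$ of $\mathcal{C}\sm V$ you must join \emph{some} $A\in P$ to \emph{some} $B\in Q$ outside $V$, and your escape route from $A$ to $A\# J^{\#m}$ is controlled only if $v(A)$ is already large. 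But an unbounded component of $\mathcal{C}\sm V$ is only guaranteed to contain vertices at arbitrarily large \emph{distance} from the basepoint, not vertices with arbitrarily large $|v|$; a priori $v$ could be bounded on all of $P$ (that every unbounded component does contain large-$v$ vertices is essentially the conclusion you are trying to prove). Making the interpolation $v$-monotone does not help, because the problem is the starting value $v(A)$, not the dips along the way.

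The paper closes this gap with a triangle-inequality argument that works for an \emph{arbitrary} vertex $K$ sufficiently far from the basepoint $E$. If $\dt_{\delta}(K,E)\ge nr+e$ and $W_0=K, W_1=K\#Q,\dots$ are the vertices of the ascent, then for small $i$ one has $\dt_{\delta}(K,W_i)\le i$, so $\dt_{\delta}(W_i,E)<r$ would give $\dt_{\delta}(K,E)\le i+r<nr+e$, a contradiction; for large $i$ the additivity of $\e_2$ under connected sum takes over and yields $\dt_{\delta}(W_i,E)\ge r$ directly. No vertex with a large invariant needs to be \emph{found} inside $P$ or $Q$; far-from-$E$ vertices exist in every unbounded component by definition. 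Your argument becomes complete if you replace ``select $A$ and $B$ deep in the $v$-direction'' by this two-regime estimate and then route both inflated endpoints to the common vertex $E\# J^{\#m}$ (by shifting a path from $E$ to $A$, resp.\ to $B$, by $J^{\#m}$), rather than relying on a single inflated path between $A$ and $B$.
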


We describe the $\BU$-ends of $\Hm(n)$-moves (see Definition~\ref{hnmoves}, and \cite{HNT90}) by the following theorem:
\begin{thm}[$\BU$-Ends of $\Hm(n)$-moves]
\label{thm3}
For any $n\in \mathbb{N}$, the number of $\BU$-ends of each connected component of $\G(\Hm(n),\bK)$ is equal to one.
\end{thm}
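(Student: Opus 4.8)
The plan is to prove the two inequalities $\BU(C)\ge 1$ and $\BU(C)\le 1$ for every connected component $C$ of $\G(\Hm(n),\bK)$, after fixing a base vertex $K_0\in C$. The lower bound is the easy half: since $\varnothing$ is a bounded set and $\UCC(C)$ equals $1$ exactly when $C$ is unbounded, it suffices to show that every component is unbounded. For this I would fix a nontrivial knot $J$ that is $\Hm(n)$-equivalent to the unknot (such a $J$ exists by Definition~\ref{hnmoves}) together with a knot invariant $\nu$ that is additive under connected sum, changes by at most a constant $t$ under a single $\Hm(n)$-move, and satisfies $\nu(J)\neq 0$; one may take $\nu$ to be the signature. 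Since an $\Hm(n)$-move is local, performing the trivialising moves for $J$ inside a connected-summand region shows that $K_0\#J^{\#m}$ lies in $C$ and that $\dist(K_0,K_0\#J^{\#m})\le m\cdot\dist(U,J)$, while additivity of $\nu$ forces $\dist(K_0,K_0\#J^{\#m})\ge |\nu(K_0\#J^{\#m})-\nu(K_0)|/t = m|\nu(J)|/t$. Hence these distances tend to infinity, $C$ is unbounded, and $\BU(C)\ge 1$.

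For the upper bound I would fix a bounded $V\subseteq\B(K_0,R)$ and show $C\sm V$ cannot contain two unbounded components $C_1,C_2$; assuming it does, the aim is to produce a path in $C\sm V$ joining a vertex of $C_1$ to one of $C_2$. Two elementary mechanisms are available. First, a \emph{radial} mechanism: from a vertex $u$ with $\dist(K_0,u)$ large, the connected-sum tower $u,\,u\#J,\,u\#J^{\#2},\dots$ is a genuine edge-path whose consecutive steps have length at most $c=\dist(U,J)$, so by the triangle inequality it stays outside $\B(K_0,R)$ as long as one takes at most $(\dist(K_0,u)-R)/c$ steps. Second, a \emph{loading} mechanism: for any finite path $Q$ in $C$, the connect-summed path $Q\#J^{\#m}$ has every vertex pushed to $\nu$-distance $\approx m|\nu(J)|$ from $K_0$, hence outside $\B(K_0,R)$ once $m$ is large, so it is a path in $C\sm V$. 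Combining the two should let one climb from $C_1$ out to a common ``highway at infinity'' assembled from connected sums and then descend into $C_2$.

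The main obstacle is that this naive combination fails to close up, owing to a genuine rate obstruction: since $\nu$ changes by at most $t$ per move one always has $|\nu(J)|\le t\cdot\dist(U,J)$, so the loading needed to force a length-$\ell$ connecting path outside $\B(K_0,R)$ grows linearly in $\ell$, while the radial tower that must deliver that loading forces its starting vertex correspondingly farther out, and the two estimates cancel exactly. Thus no argument using a single connected-sum direction can work, and the real content is a \emph{connectivity-at-infinity} statement: any two knots far from $K_0$ can be joined by $\Hm(n)$-moves all of whose intermediate knots remain far from $K_0$, i.e. the super-level sets $\{K:\dist(K_0,K)>R\}$ are connected in $C\sm V$. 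I would prove this by constructing bounded-cost \emph{lateral} moves—ones that alter the knot substantially while keeping the certificate $|\nu(\cdot)-\nu(K_0)|$ large—out of $\Hm(n)$-tangle replacements performed in a part of the diagram disjoint from the summand responsible for the distance to $K_0$, together with commutativity of connected sum and cancellation of a summand against its mirror. This step, which exploits the abundance of mutually independent $\Hm(n)$-directions already visible in the Gambaudo--Ghys embeddings $\mathbb Z^{d}\hookrightarrow\G(\X,\bK)$ of~\cite{GG05}, is where the $\Hm(n)$-specific input enters and is the heart of the proof; granting it, joining $C_1$ to $C_2$, and hence $\BU(C)\le 1$, follows by routing around $V$ exactly as one would in $\mathbb Z^{2}$.
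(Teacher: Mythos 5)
Your two mechanisms --- the radial tower $u,\,u\# J,\,u\# J^{\#2},\dots$ and the loading of a whole path by a connected summand --- are exactly the ingredients of the paper's argument (they are formalized there as \nameref{shift} and combined in \nameref{basiclem}), so the skeleton is right. The gap is in what you do next. Having observed that a \emph{signed, additive} certificate such as the signature suffers an exact cancellation between the loading required and the reach of the radial tower, you conclude that ``no argument using a single connected-sum direction can work'' and replace the closing step by an unconstructed ``connectivity at infinity via lateral moves'' claim, buttressed only by an appeal to the Gambaudo--Ghys $\mathbb{Z}^d$-embeddings (which are quasi-isometric embeddings for the $\X$-move, give no control over $\Hm(n)$, and in any case do not by themselves yield one-endedness of the ambient graph). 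As written, the heart of your proof is missing.

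The obstruction you identify is real for the signature but is not an obstruction to the single-direction strategy: the fix is to change the certificate, not the geometry. The paper uses $\e_2(K)$, the minimal number of generators of ${\bf H}_1(\Sigma_2(K))$. By \nameref{anothertanglebr} this changes by at most $n-1$ per $\Hm(n)$-move, and since ${\bf H}_1(\Sigma_2(A\# B))={\bf H}_1(\Sigma_2(A))\oplus{\bf H}_1(\Sigma_2(B))$, it is \emph{monotone} under connected sum: $\e_2(A\# B)\ge \e_2(B)$ for every $A$, and $\e_2(Q^{\# M})\ge M$ whenever $\e_2(Q)\ge 1$ (take $Q=3_1$, which lies in $\Sf_1^{\Hm(n)}(\U)$ for all $n$). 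Consequently a \emph{fixed} loading $M\approx(n-1)r$ pushes every knot of the form $(\,\cdot\,)\# Q^{\# M}$ outside $\B_{r-1}^{\Hm(n)}(E)$, \emph{independently of the length of the path being shifted} --- there is no term growing linearly in $\ell$, and the cancellation you feared disappears. One then connects any far-away $K$ to the single fixed vertex $E\# Q^{\# M}$ by first climbing the tower $K\to K\# Q^{\# M}$ (which stays outside the ball by the triangle inequality, since $K$ starts at distance $\ge nr+\e_2(E)$) and then following the $Q^{\# M}$-loaded copy of any path from $K$ to $E$. Your lower-bound paragraph is fine but is subsumed by this construction.
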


The proof of these theorems is based on \nameref{anothertanglebr}, which is formulated and proved in Section~\ref{sec:branched}, \nameref{shift}, and \nameref{basiclem}, which are formulated and proved in Section~\ref{sec:main}.

\medskip

In 2005, Marché answered some of the questions posed by Gambaudo and Ghys. In particular, he proved that if $V$ is a finite set of knots, then $\G(\X,\bK\sm V)$ is connected, see~\cite{Ma05}. We generalize this result to a large class of knot transformations by the following theorem, where by $\Sf_1^{\delta}(\U)$ we denote the set of knots adjacent to the unknot in $\G(\delta, \bK)$ (see Definition~\ref{ballandsphere}):

\begin{thm}[$\FI$-Ends and $\FU$-Ends]
\label{thm4}
Let $\delta$ be a local move such that $\Sf_1^{\delta}(\U)$ is an infinite set then the number of $\FI$-ends and the number of $\FU$-ends of each connected component of $\G(\delta, \bK)$ are equal to one.
\end{thm}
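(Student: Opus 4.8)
The plan is to show, for each connected component $C$ of $\G(\delta,\bK)$, that $C\sm V$ is connected for every finite $V$, and that $C$ is both infinite and unbounded; these three facts give the theorem. Indeed, since every unbounded component is infinite we have $\UCC(C\sm V)\le\ICC(C\sm V)$ for each $V$, hence $\FU(C)\le\FI(C)$; connectedness of $C\sm V$ forces $\FI(C)\le1$ and $\FU(C)\le1$, while $C$ infinite gives $\FI(C)\ge1$ and $C$ unbounded gives $\FU(C)\ge1$. The engine will be a locality observation: for $J\in\Sf_1^{\delta}(\U)$ and any knot $K$, the knots $K$ and $K\#J$ are adjacent, since the single $\delta$-move realizing $\U\to J$ can be carried out inside a trivial summand of $K=K\#\U$. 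I would record two consequences. First, because $\Sf_1^{\delta}(\U)$ is infinite and connected sum is cancellative, every vertex $K$ has infinitely many distinct neighbours $K\#J$; in particular each $C$ is infinite. Second, performing the same move away from a fixed summand $P$ turns an edge $z\sim z'$ into an edge $z\#P\sim z'\#P$, so $K\mapsto K\#P$ maps edges to edges.

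Next I would prove connectivity of $C\sm V$ for finite $V$. Writing $g$ for the Seifert genus (additive under $\#$, with $g(J)\ge1$ for nontrivial $J$) and $N=\max_{L\in V}g(L)$, I would argue in two steps. In the first step I connect any two knots $x,y\in C$ with $g(x),g(y)>N$: taking any path $x=z_0\sim\dots\sim z_r=y$ in $C$ and a large power $P=J^{\#m}$ with $m\,g(J)>N$, the lifted path $z_0\#P\sim\dots\sim z_r\#P$ and the two ladders $x\sim x\#J\sim\dots\sim x\#P$, $y\sim\dots\sim y\#P$ all stay in genus $>N$, hence avoid $V$, and concatenate to a path from $x$ to $y$ in $C\sm V$. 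In the second step I connect an arbitrary $a\in C\sm V$ to a knot of genus $>N$: I build $a=a_0\sim a_1\sim\dots$ with $a_{i+1}=a_i\#J_i$, choosing $J_i\in\Sf_1^{\delta}(\U)$ so that $a_{i+1}\notin V$ (possible since there are infinitely many candidates but only finitely many lie in $V$), each step raising the genus by at least one. Combining the two steps yields connectivity, and with $C$ infinite this already gives $\FI(C)=1$ and $\FU(C)\le1$.

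The remaining, and I expect hardest, point is that $C$ is \emph{unbounded}, which is what separates $\FU$ from $\FI$: since $\G(\delta,\bK)$ is never locally finite, ``infinite'' does not imply ``unbounded'', so a purely combinatorial escape argument is not enough. Here I would introduce a geometric invariant that is Lipschitz along edges and unbounded on $C$, and the natural choice is again the Seifert genus. The hard part will be the estimate that a single $\delta$-move changes the genus by at most a constant $c_{\delta}$ depending only on the two tangles of $\delta$: given a minimal Seifert surface, one cuts it along the ball supporting the move and recaps inside that ball at a genus cost bounded in terms of the tangles, so $|g(K)-g(K')|\le c_{\delta}$ whenever $K\to K'$ in one move. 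Granting this, $d_{C}(K_0,K)\ge|g(K)-g(K_0)|/c_{\delta}$, and since $g$ is unbounded on $C$ (for instance on the knots $K_0\#J^{\#n}$) the component $C$ has infinite diameter. This gives $\UCC(C)=1$, hence $\FU(C)=1$, completing the plan. The one place where the argument must leave graph theory and use the topology of $\delta$ is precisely this uniform genus estimate, and verifying it carefully for an arbitrary local move is where I expect the real work to lie.
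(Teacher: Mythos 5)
Your reduction of the theorem to three facts --- connectivity of $C\sm V$ for every finite $V$, infiniteness of $C$, and unboundedness of $C$ --- is logically sound, and your treatment of the first two is correct. The connectivity argument is a variant of the paper's: the paper first proves a general Path Shifting Lemma (any path can be shifted by a connected summand, with consistent choices of gluing homeomorphisms), then, given a path $\gamma$ from $K$ to $S$, picks a single $Q\in\Sf_1^{\delta}(\U)$ outside $X$ for which the shifted path $\gamma(Q)$ misses the finite set $X$ (possible because, by unique prime factorization, distinct $Q$ give distinct shifted vertices, so only finitely many $Q$ are bad), and closes up with the two shifted edges $K\sim K\# Q$ and $S\sim S\# Q$. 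Your genus-filtration version accomplishes the same thing with slightly more machinery (additivity of Seifert genus under connected sum); both rest on the same locality observation, and your infiniteness remark is also fine.

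The genuine gap is the uniform estimate $|g(K)-g(K')|\le c_{\delta}$ for knots related by a single $\delta$-move, on which your entire unboundedness step rests. This is false already for the crossing change: by Kondo's theorem \cite{Kon79} every Alexander polynomial is realized by an unknotting-number-one knot, and since $\deg\Delta_K\le 2g(K)$ this produces knots of arbitrarily large Seifert genus adjacent to the unknot in $\G(\X,\bK)$. Hence no constant $c_{\X}$ exists and the lower bound $\dt_{\delta}(K_0,K)\ge |g(K)-g(K_0)|/c_{\delta}$ is unavailable; the intuition that one can ``recap inside the ball at bounded genus cost'' fails because a minimal-genus surface for $K'$ need not be obtainable from one for $K$ by a modification supported in the ball where the move takes place. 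You are right that unboundedness is exactly what separates $\FU$ from $\FI$ and deserves an argument (the paper deduces both counts directly from connectivity of $C\sm X$, leaving infiniteness and unboundedness implicit), but genus is the wrong invariant: what is needed is an edge-Lipschitz quantity unbounded on each component, and for the moves treated elsewhere in the paper that role is played by $\e_2$, the minimal number of generators of $H_1$ of the double branched cover, via the Lower Estimates Lemma --- not by the genus. As written, your proof of $\FU(C)=1$ does not go through.
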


In addition, we have a conjecture that the number of $\BU$-ends of $\G(\delta,\bK)$ is equal to the number of its connected components for any almost trivial move~$\delta$ (see Definition~\ref{almosttrivialmoves}). We could also ask for the number of $\BI$-ends of a knot transformation.

\section*{Structure of the paper}
In Section~\ref{sec:preliminaries}, we recall some basic definitions of knot theory.

In Section~\ref{sec:rational},  we recall some basic concepts of the theory of rational tangles.

In Section~\ref{sec:gordian}, we give the formal definition of the Gordian graph and introduce a notion of "ends" of a graph.

In Section~\ref{sec:local}, we give the formal definition of a local move, give some examples of well-known local moves, and introduce a new family of local moves called almost trivial moves.

In Section~\ref{sec:branched}, we recall the concept of a branched covering and give some related results that we need. Also, we prove \nameref{anothertanglebr}.

In Section~\ref{sec:alexander}  we define the Alexander polynomial and the Conway polynomial and give some related results that we need. 

In Section~\ref{sec:main}, we prove \nameref{shift}, \nameref{basiclem}, Theorem~\ref{thm1}, Theorem~\ref{thm2}, Theorem~\ref{thm3}, Theorem~\ref{thm4}.

\section*{Acknowledgments}
The author is deeply indebted to his advisor, Dr.~Andrei Malyutin, for his guidance, patience, insight, and support. The author grateful to \mbox{Arshak} Aivazian, Ilya Alekseev, Vasilii Ionin and other participants of the Low-dimensional topology student seminar of the Leonhard Euler International Mathematical Institute in Saint Petersburg for helpful discussions.

\section{Preliminaries}\label{sec:preliminaries}
In this section, we recall some of the basic concepts, objects, and constructions of knot theory, that we need.  By a link we mean a piecewise-smooth embedding of a disjoint union of a finite number of circles into an oriented three-dimensional sphere $S^3$. We also use the term link to refer to the image of this embedding considered up to ambient isotopy. In this paper, all links are assumed to be tame and unoriented unless said otherwise. By a knot we mean a one-component link. We use the notation $\U$ for the unknot and denote by $\bL$ the set of all links, by~$\bL^\circ$ the set of all oriented links, and by $\bK$ the set of all knots (here, by the set of all links we mean, of course, the countable set of all isotopy classes of links). Let $K$ be a knot in $S^3$. An orientable surface $M$ in $S^3$ is called a \emph{Seifert surface} for $K$ if $\partial M=K$. It is known that any knot has an associated Seifert surface, see~\cite{Ro03}.

\begin{definition}[connected sum]
\label{consum}
Let $K$ and $Q$ be knots in $S^3$. We say that a knot~$W$ in $S^3$ is a \emph{connected sum} of $K$ and $Q$ if there are knots $K'$, $Q'$, and~$W'$ in~$S^3$ and a $3$-ball $B$ in~$S^3$ such that $K$, $Q$, and $W$ are ambient isotopic to \mbox{$K'$, $Q'$,} and~$W'$, respectively, $K'$ lies in $B$, $Q'$ lies in $S^3\sm \Int(B),$ $K'\cap Q'$ is a simple arc~$\gamma$ lying in $\partial B$, and $W'=(K'\cup Q')\sm\Int(\gamma).$ 
\end{definition}

\begin{remark}
Note that in the general case there can be two distinct knots each of which is a connected sum of $K$ and $Q$. This problem of ambiguity is solved by choosing an orientation on $K$ and $Q$, but due to the specifics of our further reasoning and the desire to work with unoriented knots we are also satisfied with this not a very clear definition.
\end{remark}

\begin{definition}[tangle] An \emph{$n$-tangle} is a pair $B_A=(B,A)$, where $B$ is a three-di\-men\-sional ball and $A$~is a collection of $n$ disjoint arcs embedded in $B$ such that the sphere $\partial B$ intersects with each arc at the endpoints of this arc and only at them. We call $B$ the \emph{base ball} of $B_A$. The \emph{strings} of $B_A$ are connected components of~$A$.    
\end{definition}

\begin{remark}
Two $n$-tangles $B_A$ and $B_C$ are said to be \emph{isotopic} if the set of endpoints~$\partial A$ coincides with $\partial C$, and if there is an ambient isotopy of $(B, A)$ to~$(B, C)$ that is the identity on the boundary $(\partial B,\partial A)=(\partial B, \partial C)$. Note that we also use the same term 
 tangle to denote an equivalence class with respect to isotopy. In this connection, below we use the notation $T=S$ for a pair of isotopic tangles $T$ and $S$, that means both the isotopy of the representatives and the equality of the corresponding isotopy classes. 
\end{remark}

\section{Rational tangles}\label{sec:rational}
In this section, we focus on a special class of $2$-tangles called rational tangles. We define some basic concepts of the rational tangles theory and give some necessary results. A more detailed survey can be found in~\cite{KL02}, \cite[p.~21]{Kaw96}, \cite{Con70}, \cite[p.~189]{BZ85}, \cite[p.~189]{Cro04}, \cite[p.~171]{Mur96}. It is worth noting that there is no universally accepted notation in this theory. To avoid confusion, we note that we use the notation of~\cite{KL02} and \cite[p.~21]{Kaw96}.

Let us first give a formal definition of a rational tangle. A \mbox{$2$-tangle} $B_A$ is called \emph{rational} if there is an orientation-preserving homeomorphism of pairs
$$ h\colon (B,A)\rightarrow(D^2\times I,\{x,y\}\times I) $$ 
where $I$ is a unit interval, $D^2$ is a unit disk, $x$ and $y$ are two distinct points on~$D^2$. Denote by $[0]$ and $[\infty]$ the two simplest rational tangles whose diagrams are shown in Figure~\ref{Fig03}. These two tangles are called \emph{trivial}. It is easy to see that a rational tangle is just a tangle that can be obtained by applying a finite number of consecutive twists of neighbouring endpoints to either $[0]$ or $[\infty]$. 

\begin{figure}[H]
\center{\includegraphics[width=0.6\textwidth]{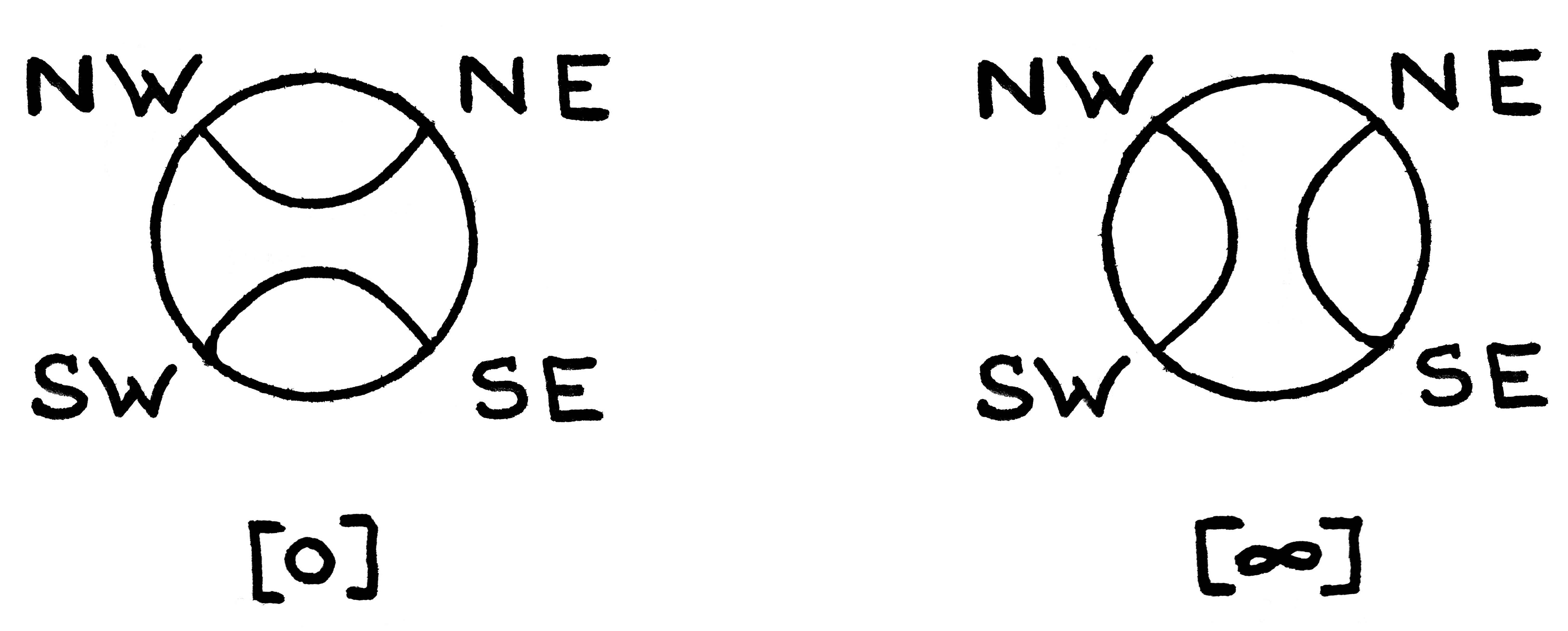}}
\caption{\ti{Trivial tangles}} 
\label{Fig03}
\end{figure}

For a more precise description of rational tangles, we need the following auxiliary notions. There are two operations on $2$-tangles called the \emph{addition} and the~\emph{star-pro\-duct}. The addition of $2$-tangles~$T$ and~$S$ is performed by attaching the two right endpoints of~$T$ to the two left endpoints of~$S$ as shown in Figure~\ref{Fig04}. The result of addition is denoted by~$T+S$. The star-product of $2$-tangles~$T$ and~$S$ is performed by attaching the two lower endpoints of~$T$ to the two upper endpoints of~$S$ as shown in Figure~\ref{Fig04}. The result of star-product is denoted by~$T*S$. 

\begin{figure}[H]
\center{\includegraphics[width=0.5\textwidth]{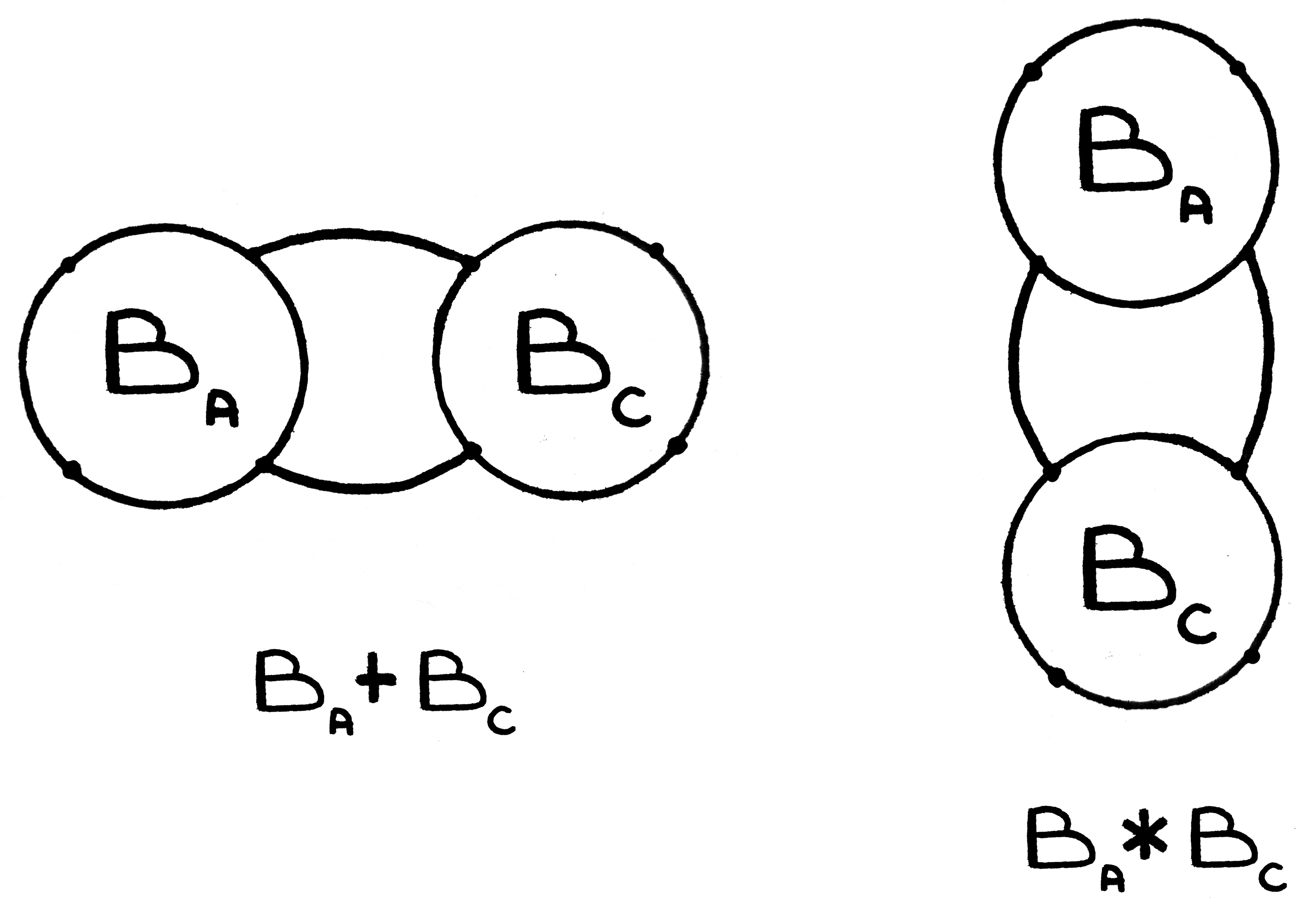}}
\caption{\ti{Operations}} 
\label{Fig04}
\end{figure}

We denote by~$[1]$ and~$[-1]$ the tangles shown in Figure~\ref{Fig05} to the right and left of $[0]$, respectively. A $2$-tangle is called \emph{integer} (resp.~\emph{vertical}) if it can be obtained from $[0]$ (resp.~from $[\infty]$) by
a finite number of consecutive additions of (resp.~multiplications by) $[-1]$ or $[1]$. Let $n$ be a non-negative integer. The sum of~$n$ copies of $[1]$ (resp.~of $[-1]$) is denoted by $[n]$ (resp.~by $[-n]$). The \mbox{star-product} of $n$ copies of $[1]$ (resp.~of $[-1]$) is denoted by $\frac{1}{[n]}$ (resp.~by $\frac{1}{[-n]}$). Note that for any integer tangle $T$ there exists $m\in\mathbb{Z}$ such that $T=[m]$ and for any vertical tangle~$S$ there exists $k\in\mathbb{Z}$ such that $S=\frac{1}{[k]}$. We also note that by defini\-tion,~$\frac{1}{[0]}=[\infty]$. We can now give a description of the canonical form for a rational tangle as follows. 

\begin{figure}[H]
\center{\includegraphics[width=0.8\textwidth]{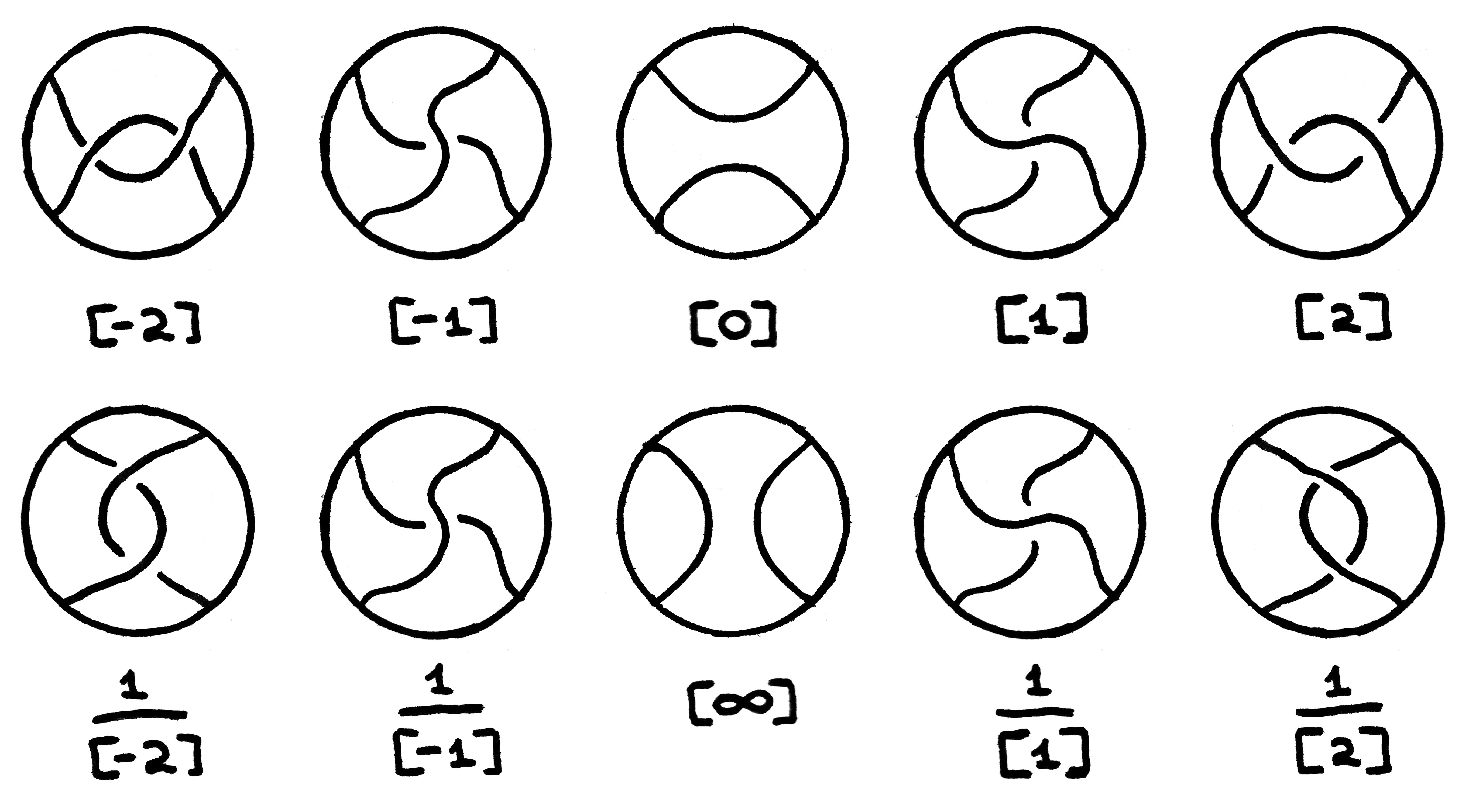}}
\caption{\ti{Integer and vertical tangles}} 
\label{Fig05}
\end{figure}

\begin{lemma}[Algebraic canonical form (\cite{KL02})]
\label{lem:ACF}
Let $T$ be a rational tangle. Then there is an odd number $n$ and there are 
 $a_1\in\mathbb{Z}$ and $a_2,\,\dots,\,a_{n}\in \mathbb{Z}\setminus\{0\}$ such that the $a_i$’s are all  positive  or all negative and
$$\big(\big(\big([a_n]*\frac{1}{[a_{n-1}]}\big)+[a_{n-2}]\big)*\dots*\frac{1}{[a_2]}\big)+[a_1]=T.$$
\end{lemma}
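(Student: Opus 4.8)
The plan is to argue by induction on the number of twists in a twisting presentation of $T$, using the characterization recalled above that every rational tangle is produced from $[0]$ or $[\infty]$ by finitely many consecutive twists of neighbouring endpoints. The trivial tangles furnish the base of the induction: $[0]$ already has the required shape with $n=1$ and $a_1=0$, and $[\infty]=\frac{1}{[0]}$ is the corresponding boundary case. For the inductive step I would write $T$ as a tangle $T'$, which by the inductive hypothesis is already in the alternating continued-fraction shape, followed by one further twist of a neighbouring pair of endpoints.

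The step amounts to analysing the four possible twist locations and checking that each again yields a tangle of the required shape. A twist of the right-hand pair of endpoints sends $T'=(\dots)+[a_1]$ to $T'+[\pm1]=(\dots)+[a_1\pm1]$, so it merely alters the outermost entry and leaves $n$ unchanged. A twist of the bottom pair sends $T'$ to $T'*\frac{1}{[\pm1]}$; padding with a trivial summand as $\big(T'*\frac{1}{[\pm1]}\big)+[0]$ restores the outermost addition, shifts all indices by two, and increases the odd length $n$ by two. Twists of the left and top pairs are reduced to the previous two cases by moving the new twist across the tangle with flype isotopies, which preserve both rationality and the alternating shape. After this step $T$ is presented in the alternating form, but the sign of the entry created by the last twist need not agree with the signs of the entries inherited from $T'$.

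It therefore remains to arrange that $a_2,\dots,a_n$ are all positive or all negative. I would carry this out at the level of the associated continued fraction $a_1+\cfrac{1}{a_2+\cfrac{1}{\ddots+\cfrac{1}{a_n}}}$, using the elementary rewriting identities that trade a change of sign for the insertion of a $\pm1$ entry, each of which is realised by an explicit flype isotopy of tangles; iterating them makes the interior entries sign-coherent, and a final splitting of the innermost entry restores the parity of $n$ and the nonvanishing of the interior entries when needed. I expect this sign-uniformization to be the main obstacle: extracting some alternating expression is immediate from the twisting definition, whereas verifying that the continued-fraction identities correspond to genuine tangle isotopies, while simultaneously keeping $n$ odd, keeping $a_2,\dots,a_n$ nonzero, and forcing their common sign, is where the real content lies. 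A convenient device for organising this bookkeeping is the tangle fraction $F\colon\{\text{rational tangles}\}\to\mathbb{Q}\cup\{\infty\}$ of \cite{KL02}, under which the canonical expression with entries $a_1,\dots,a_n$ takes the value $a_1+\cfrac{1}{a_2+\cfrac{1}{\ddots+\cfrac{1}{a_n}}}$; this reduces the geometric claim to the purely arithmetic fact that every rational number admits an all-same-sign continued-fraction expansion of odd length.
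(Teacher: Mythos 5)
This lemma is imported from \cite{KL02}; the paper gives no proof of it, so there is nothing internal to compare your argument against, and the right benchmark is the proof in that reference --- which your sketch essentially reproduces (induction on a twist presentation, flypes to transport left and top twists to the right and bottom, and continued-fraction rewriting realized by isotopies to force sign coherence and odd length). Two points deserve attention. First, $[\infty]$ is not a ``boundary case'' that the canonical form absorbs: for $n=1$ the displayed expression is an integer tangle, and for $n\ge 3$ the inner continued fraction $a_2+1/(a_3+\dots)$ stays bounded away from $0$ once $a_2,\dots,a_n$ are nonzero and of one sign, so the expression always has finite fraction and is never isotopic to $[\infty]$. The statement tacitly excludes $[\infty]$, and your base case should say so rather than fold it in via $[\infty]=\frac{1}{[0]}$. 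Second, your closing reduction ``to the purely arithmetic fact that every rational number admits an all-same-sign continued-fraction expansion of odd length'' presupposes that two tangles in standard form with equal fractions are isotopic --- that is, the Classification theorem, which in \cite{KL02} is deduced \emph{after} and partly \emph{from} this canonical form lemma, so leaning on it here would be circular. You must therefore stand on the other leg you mention: each arithmetic rewriting move (absorbing a zero entry, adjusting the parity of $n$ by splitting the innermost entry, and the sign-change identities) has to be realized by an explicit tangle isotopy, which is precisely the technical content of the proof in \cite{KL02} and is asserted rather than carried out in your sketch. With those two repairs the outline is correct and is the standard argument.
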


We also define three more operations on 2-tangles and describe their properties. The \emph{mirror image} of a $2$-tangle $T$ is denoted by $-T$ and it is obtained by switching all the crossings in $T$. The \emph{rotate} of $T$ is denoted by $T^R$ and it is a tangle obtained by counter-clockwise rotation of $T$ by $90^\circ$. The \emph{inverse} of $T$ is denoted by $\frac{1}{T}$ and it is defined to be $-T^R$. Note that all these operations preserve the class of rati\-onal tangles. Figure~\ref{Fig07} shows examples of their application. It is easy to see that we have 
$$T=\frac{1}{\frac{1}{T}},\,\,\,\,\,\,\,\,\,\,\,T^R=\frac{1}{-T}=-\frac{1}{T}.$$ 

In addition we note that all of the above notation (for addition, multiplication, inversion, and mirror image) is consistent with the notation for integer and vertical tangles, namely, if $n,m\in\mathbb{Z}$, then
$$[n]^i =\frac{1}{[n]},\,\,\, \Bigg(\frac{1}{[n]}\Bigg)^i=[n],\,\,\,-[n]=[-n],\,\,\,-\frac{1}{[n]}=\frac{1}{[-n]},$$
$$[n]+[m]=[n+m],\,\,\,\,\frac{1}{[n]}*\frac{1}{[m]}=\frac{1}{[n+m]}$$
and so on. And we have the following lemma.

\begin{lemma}[Operation Properties (\cite{KL02})]
\label{lem:OP}
Let $T$ be a rational tangle, and let $n\in\mathbb{Z}$, then we have
$$[n]+T=T+[n],\,\,\,\,\,\,\,\,\,T*[n]=\frac{1}{[n]+\frac{1}{T}}.$$
\end{lemma}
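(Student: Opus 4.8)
The plan is to deduce both identities directly from the geometric definitions of the operations $+$ and $*$, together with the relations already recorded for the mirror image, rotate, and inverse, namely $\frac{1}{T}=-T^{R}$, $T^{R}=-\frac{1}{T}$, and $T=\frac{1}{1/T}$. No new invariant of rational tangles is needed; everything reduces to elementary isotopies of tangle diagrams and bookkeeping with these relations.

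For the first identity I would argue by induction on $|n|$, reducing to the single case $[1]+T=T+[1]$. The case $n=0$ is just that $[0]$ is a neutral element for $+$, and negative $n$ is symmetric; for the inductive step I would write $[n]+T=[1]+([n-1]+T)$, apply the case $n=1$ to the rational tangle $[n-1]+T$ to obtain $([n-1]+T)+[1]=[n-1]+(T+[1])$, and then apply the induction hypothesis to the rational tangle $T+[1]$. To settle $[1]+T=T+[1]$ itself I would exhibit an explicit ambient isotopy of the base ball, fixing its four boundary points, that transports the lone half-twist from the left of $T$ to its right. Concretely this is a flype, which slides the crossing around $T$ at the cost of rotating $T$ by $180^{\circ}$ about a horizontal axis; the point is that a rational tangle is isotopic to the tangle obtained from it by this $180^{\circ}$ flip (the reversibility of rational tangles), so the flype returns $T$ unchanged and yields $T+[1]$.

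For the second identity the plan is to interchange the two operations by a $90^{\circ}$ rotation. Straight from the definitions of $+$ and $*$ one checks that $(A*B)^{R}=A^{R}+B^{R}$, so that $(T*[n])^{R}=T^{R}+[n]^{R}$. I would then take the inverse of both sides, use $\frac{1}{X}=-X^{R}$ and the fact that the mirror image distributes over the addition, and rewrite the rotated twist factor $[n]^{R}$ and the term $-T^{R}=\frac{1}{T}$ by means of the consistency relations listed above. After simplifying and reordering the resulting twist factors, possibly invoking the first identity, inverting once more via $T=\frac{1}{1/T}$ returns the claimed form $\frac{1}{[n]+\frac{1}{T}}$.

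The hard part, I expect, will be the first identity, specifically making the flype rigorous, that is, establishing the reversibility of rational tangles which guarantees that the $180^{\circ}$ flip sends $T$ back to itself; this is exactly the delicate geometric input of the classification theory, and it is what makes the statement fail for general (non-rational) tangles. By contrast, the second identity should be essentially bookkeeping once $(A*B)^{R}=A^{R}+B^{R}$ is in hand, the only real care being the correct tracking of signs and orientations through the rotate and mirror operations.
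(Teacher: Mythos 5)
The paper offers no proof of this lemma (it is imported from \cite{KL02}), so there is nothing internal to compare against; your route is the standard Kauffman--Lambropoulou one, and the plan for the first identity is sound as far as it goes. Do note, though, that the flip lemma you defer (a rational tangle is isotopic, with endpoints fixed, to its $180^{\circ}$ rotations about the horizontal and vertical axes) is not only what justifies the flype: the relation $T=\frac{1}{\frac{1}{T}}$, which you use freely in the second half, is equivalent to $T^{RR}=T$ and is itself a consequence of that lemma rather than a formal identity. So essentially all of the geometric content of the statement sits inside an input you assume.

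The genuine problem is in the second identity: your bookkeeping, carried out honestly, does not land on the claimed formula. From $(T*[n])^{R}=T^{R}+[n]^{R}$, applying the mirror and the relations $\frac{1}{X}=-X^{R}$ and $[n]^{R}=\frac{1}{[-n]}$ gives $\frac{1}{T*[n]}=\frac{1}{T}+\frac{1}{[n]}$, hence $T*[n]=\frac{1}{\frac{1}{T}+\frac{1}{[n]}}$, where $\frac{1}{[n]}$ is the \emph{vertical} twist tangle. The stated right-hand side $\frac{1}{[n]+\frac{1}{T}}$ instead contains the \emph{horizontal} integer tangle $[n]$, and the first identity only commutes an integer summand past another summand --- it cannot convert $\frac{1}{[n]}$ into $[n]$. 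On the level of fractions, $\frac{1}{1/\F(T)+1/n}\neq\frac{1}{n+1/\F(T)}$ unless $n=\pm1$, so your ``reordering of twist factors'' step fails for $|n|\ge2$. What the rotation argument actually proves is $T*\frac{1}{[n]}=\frac{1}{[n]+\frac{1}{T}}$, which is the identity as it appears in \cite{KL02} and the one needed immediately afterwards to convert the algebraic canonical form $\cdots*\frac{1}{[a_i]}\cdots$ into the continued-fraction form; you should either prove that version or explicitly flag the discrepancy with the left-hand side as printed.
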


\begin{figure}[H]
\center{\includegraphics[width=0.6\textwidth]{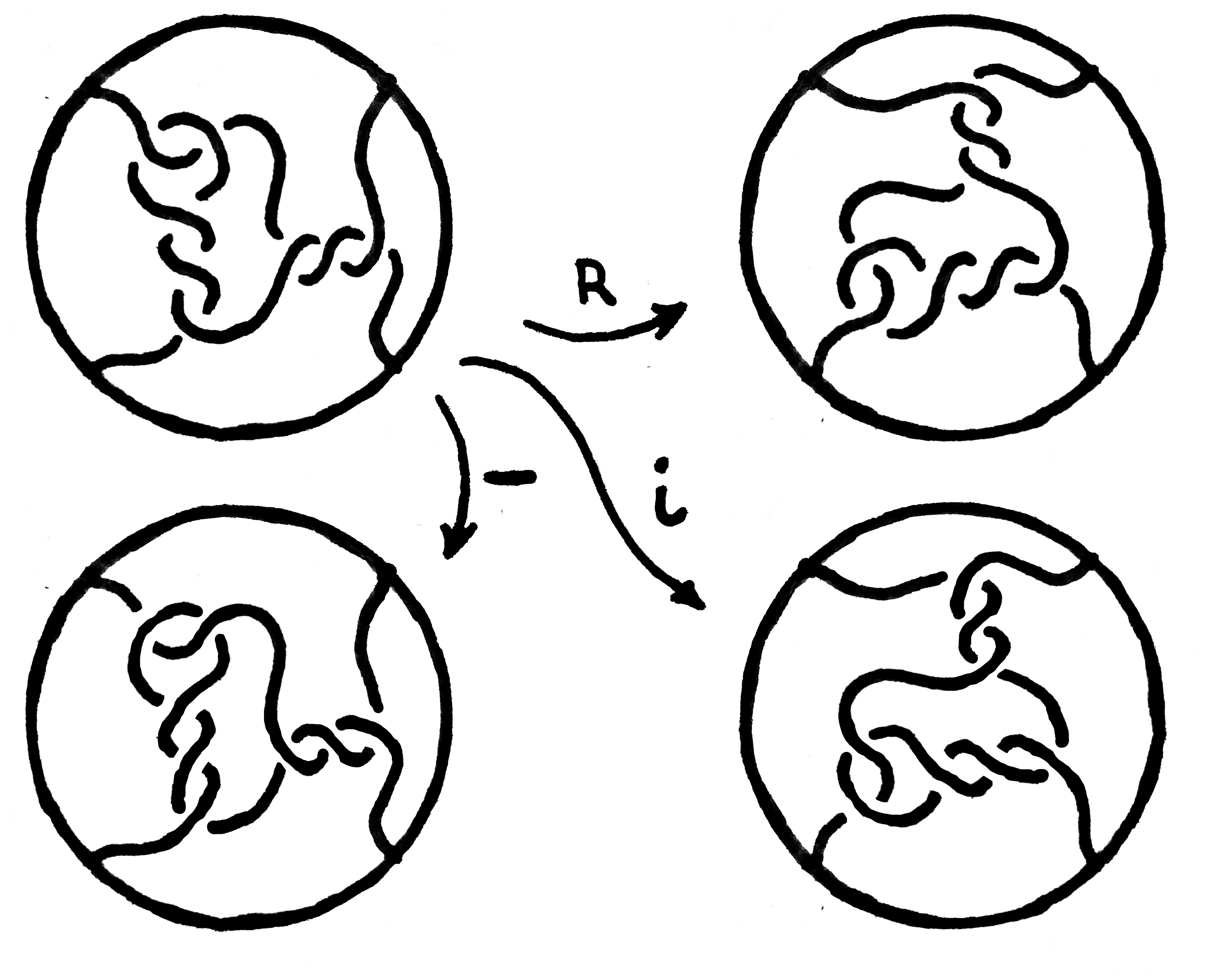}}
\caption{\ti{Mirror image, rotate, inverse}} 
\label{Fig07}
\end{figure}

It immediately follows from Lemma~\ref{lem:ACF} and Lemma~\ref{lem:OP} that any rational tangle can be represented in the so-called "continued fraction form", which means that for any rational tangle $T$ there is an odd number $n$ and there are $a_1\in\mathbb{Z}$ and~$a_2,\,\dots,\,a_{n}\in \mathbb{Z}\setminus\{0\}$ such that the $a_i$’s are all positive or all negative and
$$ T=[a_1]+\frac{1}{[a_2]+\dots+\frac{1}{[a_{n-1}]+\frac{1}{[a_n]}}}.$$
A rational number that corresponds to a continued fraction of the continued fraction form of $T$ is called, the \emph{fraction} of $T$ and denoted by $\F(T)$, that is, 
$$\F(T)=a_1+\frac{1}{a_2+\dots+\frac{1}{a_{n-1}+\frac{1}{a_n}}}$$
if $T\neq [\infty]$ and $\F([\infty])=\infty=\frac{1}{0}$ as a formal expression. And we have the following theorem.
\begin{thm}[Classification of rational tangles (\cite{KL02})]
Two rational tangles are isotopic if and only if they have the same fraction.
\end{thm}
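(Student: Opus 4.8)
The plan is to realize the fraction as a genuine topological invariant of the isotopy class by passing to the double branched cover, so that the ``only if'' direction becomes automatic, and then to recover the ``if'' direction from the classification of solid tori together with the uniqueness of the canonical continued fraction of Lemma~\ref{lem:ACF}.

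First I would attach to each rational tangle $T=(B,A)$ the double cover $\Sigma(T)$ of the base ball $B$ branched along the two arcs $A$. Since $T$ is rational, the pair $(B,A)$ is homeomorphic to $(D^2\times I,\{x,y\}\times I)$, whose branched double cover is a solid torus (the double cover of $D^2$ branched over two points is an annulus, and crossing with $I$ gives $S^1\times D^2$); hence $\Sigma(T)$ is a solid torus. The four branch points $\partial A\subset\partial B$ lift to an honest torus $\partial\Sigma(T)$, and I would fix an identification $\partial\Sigma(T)\cong\R^2/\mathbb{Z}^2$ normalized so that the boundaries of the meridian disks of $\Sigma([0])$ and $\Sigma([\infty])$ become the slopes $0$ and $\infty$. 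Under this normalization the boundary of a meridian disk of $\Sigma(T)$ acquires a well-defined slope $\rho(T)\in\mathbb{Q}\cup\{\infty\}$. By construction $\rho$ is an isotopy invariant: an ambient isotopy of $(B,A)$ fixing $(\partial B,\partial A)$ lifts to an isotopy of branched covers that is the identity on $\partial\Sigma(T)$, hence preserves the meridian slope.

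The heart of the matter is the identity $\rho(T)=\F(T)$, which I would prove by induction on the length of the continued-fraction form. The two elementary moves that generate every rational tangle---adjoining a twist (sending $T$ to $T+[\pm1]$) and inverting (sending $T$ to $\tfrac{1}{T}=-T^R$)---act on fractions by the linear-fractional maps $x\mapsto x\pm1$ and $x\mapsto\tfrac{1}{x}$, as recorded by Lemma~\ref{lem:OP} and the remarks preceding it. On the branched cover each move is realized by a mapping class of the boundary torus (a Dehn twist, respectively the basis involution exchanging the slopes of $[0]$ and $[\infty]$), and the induced action on $H_1(\partial\Sigma(T))$, hence on slopes, is given by the very same linear-fractional maps. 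Since $\rho$ and $\F$ agree on $[0]$ and $[\infty]$ and transform identically under both generators, Lemma~\ref{lem:ACF}---which presents every rational tangle as a word in these moves---yields $\rho(T)=\F(T)$ for all rational $T$. In particular $\F$ descends to a well-defined function on isotopy classes (being equal to the topological invariant $\rho$), and the ``only if'' direction is immediate.

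For the converse, suppose $\F(T)=\F(S)$, equivalently $\rho(T)=\rho(S)$, where I may take $T$ and $S$ to share the base ball and endpoints. The plan is to promote the equality of meridian slopes to an isotopy of tangles. Two solid tori whose meridian slopes coincide in the fixed boundary parametrization are joined by a homeomorphism restricting to the identity on $\partial\Sigma$; the hard part will be to arrange this homeomorphism to commute with the two covering involutions, so that it projects down to an ambient isotopy of $B$ carrying the arcs of $T$ to those of $S$ while fixing $\partial B$ pointwise. I expect this equivariant descent to be the main obstacle. Should it prove delicate, there is a purely combinatorial fallback: every element of $\mathbb{Q}\cup\{\infty\}$ admits a unique continued-fraction expansion of the normalized shape of Lemma~\ref{lem:ACF} (the entries $a_2,\dots,a_n$ of one sign and $n$ odd), so that $\F(T)=\F(S)$ forces the canonical forms of $T$ and $S$ to be built from identical data; by Lemma~\ref{lem:ACF} both are then isotopic to one and the same standard tangle, whence $T=S$.
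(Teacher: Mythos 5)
This theorem is stated in the paper as an imported classical result, credited to \cite{KL02}, and no proof of it appears anywhere in the text; so there is no in-paper argument to compare yours against line by line. Your sketch is, however, a recognizably standard and essentially sound route to the statement: the double branched cover of a rational tangle is a solid torus, the meridian slope $\rho$ is a genuine isotopy invariant, and the identity $\rho(T)=\F(T)$ follows by tracking the two generating moves (twisting and inversion) through the cover, since both act on slopes by the same linear-fractional maps as on fractions. This gives well-definedness of $\F$ on isotopy classes and the ``only if'' direction. Note that this part is doing real work that the paper's Lemma~\ref{lem:ACF} does not: as defined in the paper, $\F(T)$ is read off from \emph{a} continued fraction form of $T$, and without an argument like yours it is not even clear that two such forms of the same tangle yield the same rational number.

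For the converse, you correctly identify the genuine difficulty in the branched-cover approach: a boundary-fixing homeomorphism between two solid tori with equal meridian slopes exists, but making it commute with the two covering involutions so that it descends to the tangles is not a formality --- it amounts to the uniqueness (up to conjugacy) of the hyperelliptic involution on the solid torus, which rests on equivariant Dehn's lemma / Smith-conjecture-type input. Left as stated, that step is a gap. Your combinatorial fallback does close it, and cleanly: given $\F(T)=\F(S)$, the uniqueness of the odd-length, constant-sign continued fraction expansion of a rational number forces the canonical forms supplied by Lemma~\ref{lem:ACF} for $T$ and for $S$ to carry identical data, hence both tangles are isotopic to the same standard tangle. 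This is in fact close in spirit to the elementary proof in the cited source. Two small items you would still owe: a proof (or precise citation) of the number-theoretic uniqueness of the normalized expansion, including the edge cases $a_1=0$ and the tangle $[\infty]$, which Lemma~\ref{lem:ACF} as stated does not quite cover; and consistency of orientation conventions in the slope computation, since inversion $T\mapsto -T^R$ involves an orientation-reversing mirror whose effect on slopes is $x\mapsto -x$ and must be composed correctly with the rotation to recover $x\mapsto 1/x$.
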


The \emph{numerator} of a $2$-tangle $T$ is a link obtained as the uni\-on of the strings of~$T$ and two shortest simple arcs, one of which connects the two upper endpoints of~$T$, and the other connects the two lower endpoints of~$T$, see Figure~\ref{Fig08}. We denote the numerator of~$T$ by~$\N(T)$. A knot $K$ is called \emph{rational} if there is a rational tangle $T$ such that $K=\N(T)$. And we have the following theorem.

\begin{thm}[Classification of rational knots (\cite{KL02})]
Let $T$ and $S$ be two rational tangles such that 
$$\F(T)=\frac{p}{q},\,\,\,\,\,\,\,\,\,\,\,\F(S)=\frac{l}{m},$$ where $p$ and $q$ are relatively prime, as are $l$ and $m$. Then rational knots $\N(T)$ and~$\N(S)$ are ambient isotopic if and only if $p=l$ and either $q\equiv m\,(\Mod\,p)$ or~$qm\equiv1\,(\Mod\,p)$.
\end{thm}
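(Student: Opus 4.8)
The plan is to prove the two implications by different means: \emph{sufficiency} of the arithmetic condition by exhibiting explicit isotopies built from the tangle calculus already developed above, and \emph{necessity} by passing to double branched covers and invoking the classification of lens spaces.

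\textbf{Sufficiency.} I would isolate two elementary moves on rational knots, each realizing one of the two congruences. The first is a single vertical twist at the bottom: by Lemma~\ref{lem:OP} one has $\F(T*[\pm1])=p/(q\pm p)$, while $\N(T*[\pm1])=\N(T)$ because, after the numerator closure joins the two lower endpoints by an arc, the added half-twist becomes a removable Reidemeister~I curl. Iterating this move (in both directions) shows $\N(T)=\N(S)$ whenever $\F(T)=p/q$, $\F(S)=p/m$ and $q\equiv m\pmod p$, which is the first case. The second move is the rotation of the entire numerator-closed diagram by $180^{\circ}$ in the plane of the page; this is an ambient isotopy of $S^3$ that carries $\N(T)$ to itself while reading the continued fraction of $T$ in the reverse order. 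Writing $T$ in the canonical form of Lemma~\ref{lem:ACF}, whose length $n$ is \emph{odd}, the palindromic continuant identity for reversed continued fractions yields the reversed fraction $p/q''$ with $q\,q''\equiv(-1)^{\,n-1}\equiv1\pmod p$; composing with the first move then reaches every $m$ satisfying $qm\equiv1\pmod p$. Thus both congruences are realized by genuine isotopies.

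\textbf{Necessity.} Here I would use the double branched cover $\Sigma_2$ of $S^3$ along the knot, exactly the machinery set up in Section~\ref{sec:branched}. The double cover of the trivial $2$-tangle $(D^2\times I,\{x,y\}\times I)$ is a solid torus, so the numerator closure of a rational tangle lifts to a gluing of two solid tori along their boundary tori, the gluing matrix being read off from the continued fraction; the standard computation gives $\Sigma_2(\N(T))=L(p,q)$ when $\F(T)=p/q$. An ambient isotopy $\N(T)\simeq\N(S)$ is an orientation-preserving homeomorphism of pairs, hence lifts to an orientation-preserving homeomorphism $L(p,q)\cong L(l,m)$. Comparing first homology forces $|p|=|l|$, and after normalizing $p>0$ one gets $p=l$; the orientation-preserving classification of lens spaces (Reidemeister--Franz, via Reidemeister torsion) then gives $m\equiv q^{\pm1}\pmod p$, which is precisely the disjunction ``$q\equiv m$ or $qm\equiv1$''. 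Note that since ambient isotopy is orientation-preserving on $S^3$, no extra sign $-q^{\pm1}$ appears, matching the sign-free statement.

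\textbf{Main obstacle.} I expect the delicate point to be the orientation and framing bookkeeping in identifying $\Sigma_2(\N(T))$ with $L(p,q)$ for \emph{this} particular $q$ (rather than $-q$ or $q^{-1}$), together with matching the \emph{orientation-preserving} lens space classification so that the resulting congruences contain no stray sign. The lens space classification itself I would cite rather than reprove. A secondary subtlety is confirming the sign in the continuant reversal, where the oddness of $n$ in the canonical form of Lemma~\ref{lem:ACF} is exactly what turns the ambiguous $(-1)^{n-1}$ into $+1$; getting this parity right is what makes sufficiency produce $qm\equiv1$ on the nose. Organizing the argument so that sufficiency is purely combinatorial lets me avoid the much harder statement that the double branched cover is a \emph{complete} invariant (uniqueness of the branching involution), which is not needed here.
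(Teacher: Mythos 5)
This theorem is stated in the paper as a known classification result and is quoted from~\cite{KL02} without proof, so there is no internal argument to compare against; what can be assessed is whether your plan would actually prove it. It would: this is the classical hybrid proof. Your sufficiency argument is sound --- the bottom-twist computation $\F(T*[\pm1])=p/(q\pm p)$ follows from Lemma~\ref{lem:OP}, the closing arc does turn the added crossing into a Reidemeister~I curl so that $\N(T*[n])=\N(T)$, and together with the classification of rational tangles (same fraction implies isotopic tangles) this realizes $q\equiv m\ (\Mod\,p)$; the palindrome move with the continuant identity $qq''\equiv(-1)^{n-1}\equiv 1\ (\Mod\,p)$ (using the odd length $n$ from Lemma~\ref{lem:ACF}) then realizes $qm\equiv1\ (\Mod\,p)$. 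Your necessity argument via $\Sigma_2(\N(T))=\Ll(p,q)$ (which is exactly Lemma~\ref{lem:dbc} of this paper) and the orientation-preserving Reidemeister--Franz classification of lens spaces is Schubert's original route, and your worry about the sign convention is actually harmless: if the identification came out as $\Ll(p,-q)$ uniformly in $T$, the resulting congruences $-m\equiv(-q)^{\pm1}$ reduce to the same two cases. The one place where your write-up is looser than it should be is the geometric realization of the palindrome: which $180^{\circ}$ rotation of the standard continued-fraction diagram reverses the word depends on the drawing convention for the alternating horizontal and vertical twist boxes, so this step deserves either a careful picture or an appeal to the transpose/continuant identity as in~\cite{KL02}. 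Note also that the cited source proves necessity purely diagrammatically rather than through branched covers; your version trades that combinatorics for the lens space classification, which is a perfectly standard exchange and, as you observe, avoids any appeal to the double branched cover being a complete invariant.
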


\begin{figure}[H]
\center{\includegraphics[width=0.7\textwidth]{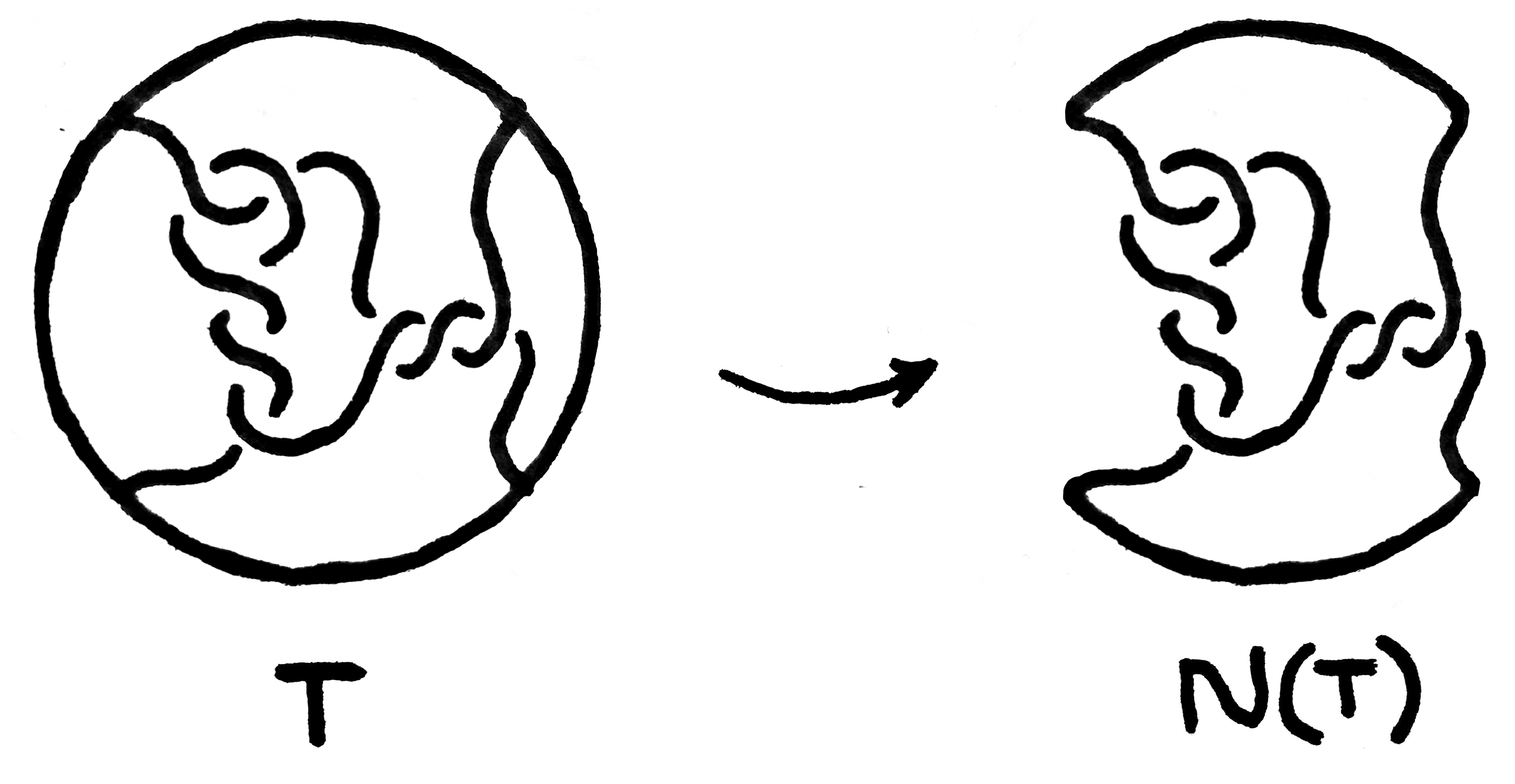}}
\caption{\ti{Numerator}} 
\label{Fig08}
\end{figure}

We also need the following important technical lemma.

\begin{lemma}[Knot or link (\cite{Cro04})]
\label{lem:odev}
Let $T$ be a rational tangle such that $\F(T)=p/q$, where $p$ and $q$ are relatively prime. Then $\N(T)$ is a knot if $p$ is odd, and a link if~$p$ is even.
\end{lemma}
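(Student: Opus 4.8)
The plan is to reduce the statement to a parity computation on the three possible ways the two strings of a rational tangle can pair up its four endpoints. Label the four endpoints of $T$ as top-left, top-right, bottom-left, and bottom-right. Since the two strings are disjoint arcs with these four points as endpoints, they realise one of exactly three pairings, which I will call the \emph{horizontal} type (top-left with top-right, and bottom-left with bottom-right), the \emph{vertical} type (top-left with bottom-left, and top-right with bottom-right), and the \emph{diagonal} type (top-left with bottom-right, and top-right with bottom-left).

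First I would settle the purely combinatorial half: $\N(T)$ is a link if and only if $T$ is of horizontal type. This follows by tracing the components of $\N(T)$, whose strands are the two tangle strings together with the two closure arcs joining the top pair and the bottom pair of endpoints. In the horizontal case the top arc closes one string into a loop and the bottom arc closes the other, giving two components; in the vertical and diagonal cases a single traversal through both strings and both closure arcs returns to the start, giving one component. Hence it remains to show that $T$ is of horizontal type exactly when the numerator $p$ of $\F(T)=p/q$ is even.

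The key observation for the second half is that the three pairing types correspond bijectively to the three points of the projective line $\mathbb{P}^1(\mathbb{F}_2)$ via horizontal $\leftrightarrow[0:1]$, vertical $\leftrightarrow[1:0]$, and diagonal $\leftrightarrow[1:1]$, matching the class of $(p,q)$ modulo $2$ (note that $\gcd(p,q)=1$ forbids $(p,q)\equiv(0,0)$). I would prove this by induction along the continued fraction form guaranteed after Lemma~\ref{lem:ACF}, building $T$ from $[0]$ by the two elementary operations $S\mapsto S+[\pm1]$ and $S\mapsto\tfrac1S$. The base case is immediate: $[0]$ is horizontal with $\F([0])=0/1$, and $[\infty]=\tfrac{1}{[0]}$ is vertical with $\F([\infty])=1/0$. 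For the inductive step, the operation $+[\pm1]$ multiplies $(p,q)$ by $\bigl(\begin{smallmatrix}1&\pm1\\0&1\end{smallmatrix}\bigr)$ and, by direct inspection of how a single horizontal twist re-routes the strings, interchanges the horizontal and diagonal types while fixing the vertical one; the inversion $S\mapsto\tfrac1S=-S^R$ swaps $(p,q)$ and, being a $90^\circ$ rotation followed by a crossing-preserving mirror, interchanges the horizontal and vertical types while fixing the diagonal one. In both cases the induced action on pairing types is exactly the reduction modulo $2$ of the action on $(p,q)$, so the dictionary is preserved.

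Combining the two halves, $\N(T)$ is a link if and only if $T$ is horizontal, i.e.\ $(p,q)\equiv(0,1)\pmod 2$, i.e.\ $p$ is even; otherwise $p$ is odd and $\N(T)$ is a knot. The step I expect to require the most care is the inductive verification that each elementary twist permutes the three pairing types in precisely the way dictated by the mod-$2$ matrix action; the bookkeeping of which endpoint is routed to which after a twist is where orientation errors most easily creep in, so I would pin it down on explicit small diagrams before invoking the $\mathbb{P}^1(\mathbb{F}_2)$ description to package the induction cleanly.
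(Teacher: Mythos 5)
Your argument is correct: the reduction to the three endpoint-pairing types, the observation that the numerator closure has two components exactly in the horizontal case, and the inductive verification that the mod-$2$ class of $(p,q)$ in $\mathbb{P}^1(\mathbb{F}_2)$ tracks the pairing type under $S\mapsto S+[\pm1]$ and $S\mapsto\frac1S$ together give exactly the stated parity criterion. The paper offers no proof of this lemma, citing \cite{Cro04} instead, and your argument is essentially the standard one found there, so there is nothing to compare beyond noting that you have supplied the details the paper omits.
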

\section{Gordian graphs}\label{sec:gordian}
In this section, we define the notion of a Gordian graph for a knot transformation and introduce some related notation. In addition, we introduce four new (global) knot transformation invariants that reflect the behavior of the corresponding Gordian graph at infinity. In a sense, these invariants can be considered a modification of the classical notion of ends of a topological space, see~\cite{Fre31}.

\begin{definition}[Gordian graph]
Let $\delta$ be a knot transformation, and let $M$ be a subset of $\bL$. Then we denote by $\G(\delta,M)$ such a graph whose  vertex set is in one-to-one correspondence with $M$, and two vertices are connected by an edge if and only if the corresponding links are obtained from each other (sic!) by a single application of $\delta$. The graph $\G(\delta,M)$ is called the \emph{Gordian graph} for $\delta$ and $M$. We consider this graph with a natural metric.
The distance between two vertices $K$ and $Q$ in this metric is denoted by $\dt_{\delta}^{M} (K,Q)$ and is called the \emph{Gordian distance}. Note that $\dt_{\delta}^{M} (K,Q)$ is the number of edges in any shortest path connecting $K$ and $Q$, if $K$ and $Q$ are in the same connected component. If $K$ and $Q$ lie in distinct connected components of $\G(\delta,M)$, then we assume that $$\dt_{\delta}^{M}(K,Q)=\infty.$$
Further, we neglect the difference between a vertex of a Gordian graph and its corresponding link, identifying these objects.
\end{definition}

\begin{definition}[unknotting distance, balls and spheres]
\label{ballandsphere}
Let $\delta$ be a knot transformation, and let $M$ be a subset of $\bL$. For a link $L\in M$ we define the \emph{unknotting distance} as the distance from $L$ to the unknot and denote it by
$$\un_{\delta}^{M}(L)=\dt_{\delta}^{M}(L,\U).$$

If $n\in\mathbb{N}$ and $K\in M$ then we denote by ${\Sf}_{n,\delta}^{M}(K)$ the set of all $Q\in M$ such that~$\dt_{\delta}^{M}(K,Q)=n$, that is, the sphere in $\G(\delta,M)$ of radius $n$ centered at $K$. Also we denote by ${\B}_{n,\delta}^{M}(K)$ the set of all $Q\in M$ such that $\dt_{\delta}^{M}(K,Q)\le n$.
\end{definition}

\begin{remark}
In what follows, we mainly consider either $\bK$ or $\bL$ as $M$, so for these cases we use slightly less-cluttered notation. We omit the reference to $M$ in the case of $M=\bK$, that is, we denote 
$$  \dt_{\delta}^{\bK}(K,Q)\,\, by\,\, \dt_{\delta}(K,Q),\,\,\,\,\,\,\,\, \un_{\delta}^{\bK}(K)\,\, by\,\, \un_{\delta}(K),$$ 
$${\Sf}_{n,\delta}^{\bK}(K)\,\,\, by\,\,\, {\Sf}_{n}^{\delta}(K),\,\,\,\,\,\,\,\,\,\, {\B}_{n,\delta}^{\bK}(K)\,\,\, by\,\,\, {\B}_{n}^{\delta}(K),   $$
for $K,Q\in \bK$, and we replace the reference to $M$ by $\circ$ in the case of $M=\bL$, that is, we denote 
$$  \dt_{\delta}^{\bL}(K,Q)\,\, by\,\, {\ci{\dt_{\delta}}(K,Q)},\,\,\,\,\,\,\,\, \un_{\delta}^{\bL}(K)\,\, by\,\, \ci{\un_{\delta}}(K),$$ 
$${\Sf}_{n,\delta}^{\bL}(K)\,\,\, by\,\,\, {\ci{\Sf_{n}^{\delta}}}(K),\,\,\,\,\,\,\,\,\,\, {\B}_{n,\delta}^{\bL}(K)\,\,\, by\,\,\, \ci{{\B}_{n}^{\delta}}(K),   $$ for $K,Q\in \bL$.

Further, when it comes to removing a certain set of vertices \mbox{$X\subset \Vrt(G)$} from a graph $G$, we always mean that all edges adjacent to these vertices are also removed. For the resulting space we use the same notation as for the ordinary complement, that is, $G\sm X$, but we never mean the ordinary complement in this context. 
\end{remark}

\begin{definition}[equivalence]
We say that two knot transformations $\delta$ and $\zeta$ are \emph{equivalent} if $\G(\delta,\bL)$ coincides with $\G(\zeta,\bL)$.
\end{definition}

We now introduce four indicators of graph behavior at infinity. Definitions of these indicators are based on a general principle but differ in detail. 

\begin{definition}[ends]
Let $G$ be a connected finite or countable graph (in what follows, all occurring graphs are assumed to be finite or countable), and let $\dt_G$ be the natural metric on $G$. Then we define \mbox{$\BU(G), \BI(G), \FU(G), \FI(G)\in\{0\}\cup\mathbb{N}\cup\{\infty\}$} as follows:
$$\BU(G)=\sup_{V\,{ \in }\,{\PB}(G)} \UCC\Big(G\sm V\Big),$$

$$\BI(G)=\sup_{V\,{ \in }\,{\PB}(G)} \ICC\Big(G\sm V\Big),$$

$$\FU(G)=\sup_{V\,{ \in }\,{\PF}(G)} \UCC\Big(G\sm V\Big),$$

$$\FI(G)=\sup_{V\,{ \in }\,{\PF}(G)} \ICC\Big(G\sm V\Big),$$
where $\UCC(G\sm V)$ is the cardinality~of the set of unbounded con\-nec\-ted components of $G\sm V$, \mbox{$\ICC(G\sm V)$} is the cardinality of the set of infinite connected components of $G\sm V$, ${\PF}(G)$ is the set of all finite subsets of $\Vrt(G)$, and ${\PB}(G)$ is the set of all bounded (as subsets of $G$) subsets of $\Vrt(G)$. 

Let $x$ be an arbitrary vertex of $G$, then it is easy to see that $\BU(G)$ can be reformulated as follows
$$\BU(G)=\sup_{n\,{ \in }\,\mathbb{N}} \UCC\Big(G\sm\B_{n}^G(x)\Big),$$
where $\B_{n}^G(x)$ is the set of all $v\in\Vrt(G)$ such that $\dt_G(v,x)\le n$. And in fact the construction does not depend on the choice of $x$. 

We can extend our definitions to the case of graphs with more than one connected component, taking as a value the sum of the corresponding values over all connected components. Let $G$ be a graph, then we say that $\BU(G)$ is the number of \mbox{\emph{$\BU$-ends}}, $\BI(G)$ is the number of \emph{$\BI$-ends}, $\FU(G)$~is the number of \emph{$\FU$-ends}, and $\FI(G)$ is the number of \emph{$\FI$-ends} of $G$.
\end{definition}

\begin{remark}
    Note that for any graph $H$ we have 
$$\FU(H)\le\BU(H)\le\BI(H)\,\, and\,\,\FU(H)\le\FI(H)\le\BI(H),$$ and all inequalities can be strict (for example, let $X_i$ be copies of an infinite complete graph with $i\in\mathbb{Z}$, and let each vertex $x\in X_i$ be connected by edges with its copies in $X_{i-1}$ and $X_{i+1}$. Then the graph $A$ obtained by gluing an infinite complete graph to an arbitrary vertex of this graph has $\FU(A)=1$, $\FI(A)=2$, $\BU(A)=2$, and $\BI(A)=\infty$).
\end{remark}

\section{Local moves}\label{sec:local}
In this section, we define the most well-studied and natural class of knot transformations. Knot transformations included in this class are called local moves. Note that often in works devoted to local moves, the definitions are not strict and partly rely on intuition, which is why the narrative acquires many subtle and unclear moments. To avoid confusion, we give a complete formal definition of a local transformation, which, nevertheless, fully corresponds to intuition. In addition, we give some examples of well-known local moves and families of local moves. Also, we introduce a new family of local moves called almost trivial moves. 

\begin{definition}[local-move-pattern]
Let $B$ be a $3$-ball in $S^3$. A \emph{local-move-pattern} is a {pair $(B_A,B_C)$}, where $B_A$ and $B_C$ are tangles such that $\partial A=\partial C$.
\end{definition}

\begin{definition}[local move]
Let $L_1$ and $L_2$ be two links in $S^3$, and let $B$ be \mbox{a~$3$-ball} in $S^3$, and let $\delta$ be a local-move-pattern with  the base ball $B$. Then we say that~$L_2$ is obtained from $L_1$ by a \emph{local $\delta$-move}, if there are links $L_1^\prime$ and~$L_2^\prime$ in~$S^3$ such that~$L_1^\prime$ is ambient isotopic to $L_1$, $L_2^\prime$ is ambient isotopic to $L_2$, $L_1^\prime$ and~$L_2^\prime$ coincide outside the interior of $B$ in $S^3$, and the pair $((B,B\cap L_1^\prime),(B,B\cap L_2^\prime))$ coincides with $\delta$.
\end{definition}

\begin{remark}
In what follows, we use the same notation for a local-move-pattern and for its corresponding local move as a knot transformation. For example, if $\delta$ is a local-move-pattern, then we denote by $\G(\delta,M)$ the Gordian graph for a local~\mbox{$\delta$-move} and $M\subset\bL$.
\end{remark}

\begin{definition}[$\X$-move (\cite{Wen37, Sch85, BCJTT2017})]
The local move defined by the pattern shown in Figure~\ref{Fig01} is called the \emph{$\X$-move}.
\end{definition}

\begin{remark}
For convenience, we show a local-move-pattern as two balls with strings, bearing in mind that this is the same ball with two sets of strings. In other words, we "highlight" first one and then another set of strings in the same ball.
\end{remark}

\begin{definition}[$\Delta$-move (\cite{MN89, O90, Hor08})]
The local move defined by the pattern shown in Figure~\ref{Fig10} is called the \emph{$\Delta$-move}.
\end{definition}

\begin{figure}[H]
\center{\includegraphics[width=0.6\textwidth]{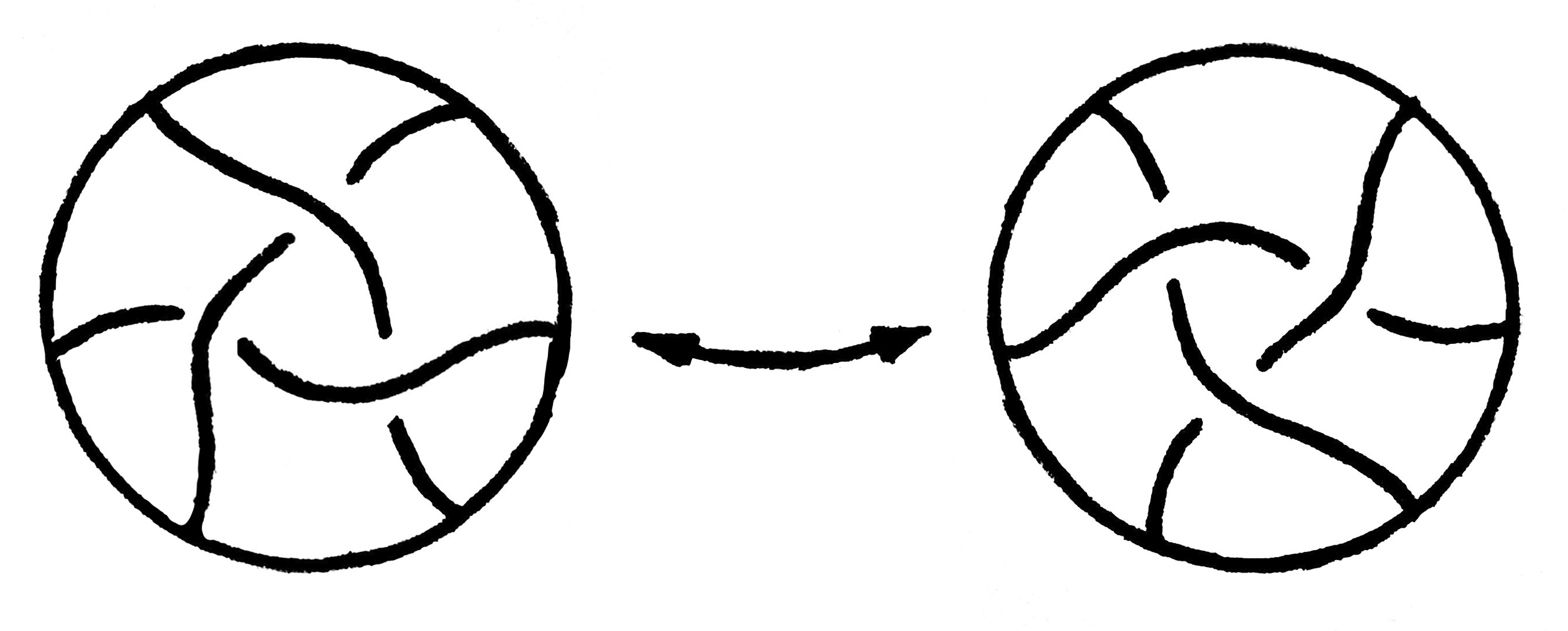}}
\caption{\ti{$\Delta$-move}} 
\label{Fig10}
\end{figure}

\begin{definition}[Clasp-pass-move (\cite{TY02})]
The local move defined by the pattern shown in Figure~\ref{Fig11} is called the \emph{Clasp-pass-move}.
\end{definition}

\begin{figure}[H]
\center{\includegraphics[width=0.6\textwidth]{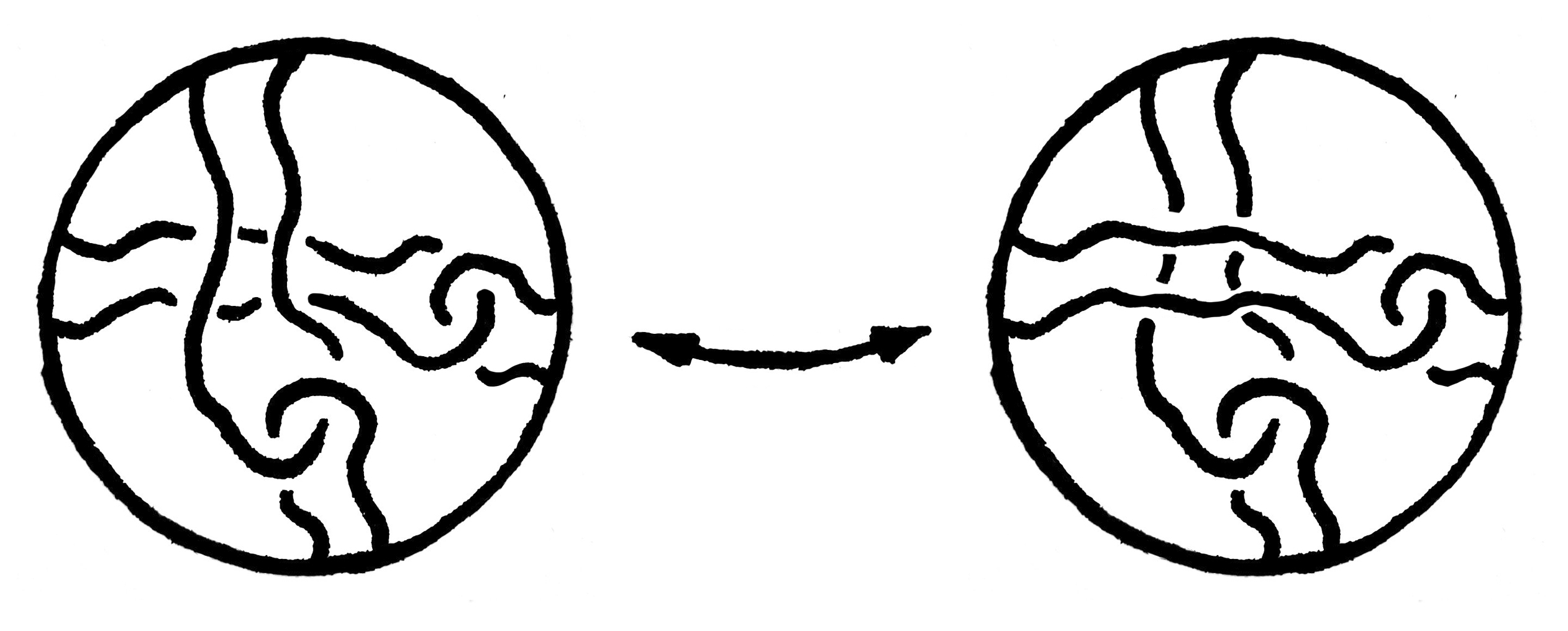}}
\caption{\ti{Clasp-pass-move}} 
\label{Fig11}
\end{figure}

\begin{definition}[$\Cm(n)$-move (\cite{Hab00, OY08, HO12})]
\label{cnmoves}
The local move defined by the pattern shown in Figure~\ref{Fig12} is called the \emph{$\Cm(n)$-move} for $n\in\mathbb{N}$.
\end{definition}

\begin{remark}
Note that the $\X$-move is equivalent to the $\Cm(1)$-move, the $\Delta$-move is equi\-valent to the $\Cm(2)$-move, and the Clasp-pass-move is equivalent to the~\mbox{$\Cm(3)$-move}.
\end{remark}

\begin{figure}[H]
\center{\includegraphics[width=0.6\textwidth]{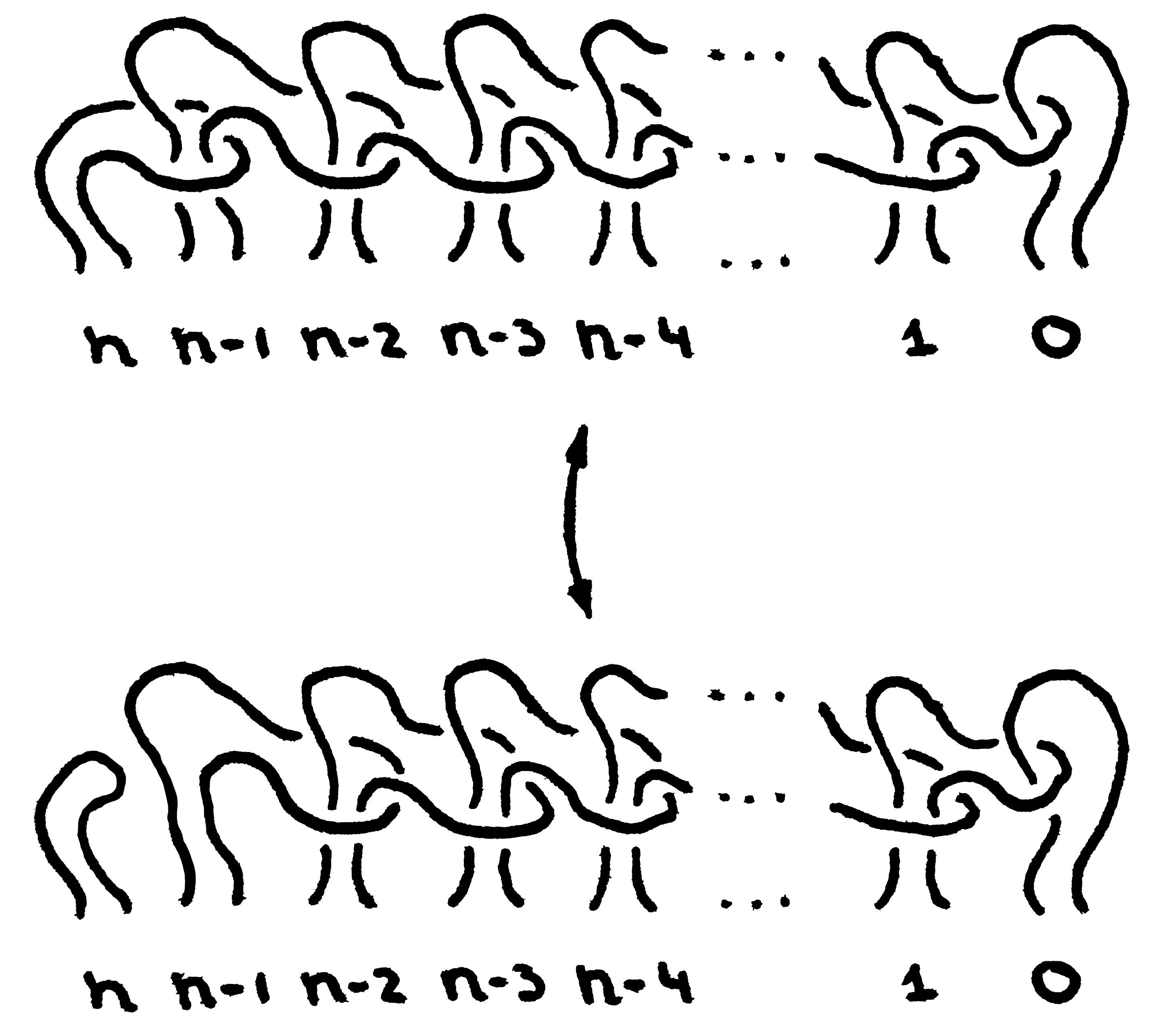}}
\caption{\ti{C(n)-move}} 
\label{Fig12}
\end{figure}

\begin{definition}[$\Hm(n)$-move (\cite{HNT90, KM09, K11})]
\label{hnmoves} 
The local move defined by the pattern shown in Figure~\ref{Fig13} is called the \emph{$\Hm(n)$-move} for $n\in\mathbb{N}$.
\end{definition}

\begin{figure}[H]
\center{\includegraphics[width=0.8\textwidth]{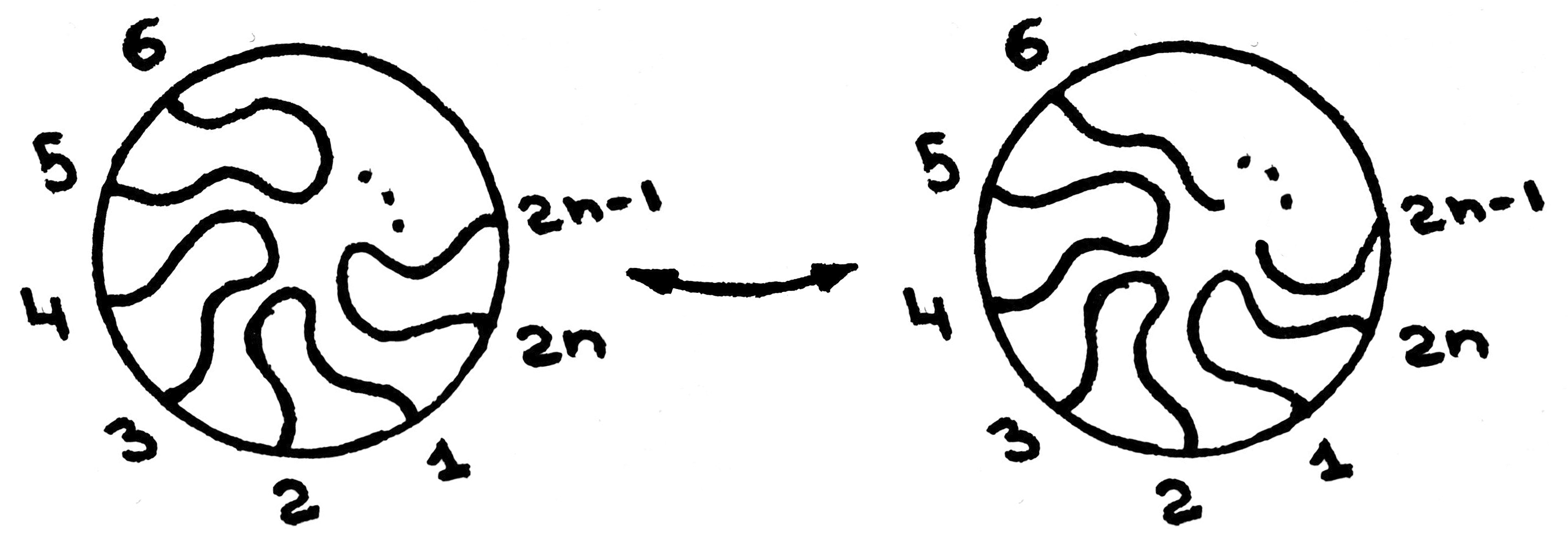}}
\caption{\ti{H(n)-move}} 
\label{Fig13}
\end{figure}

\begin{definition}[$n$-move (\cite{Prz88, DP02, MWY19})]
The local move defined by the pattern shown in Figure~\ref{Fig14} is called the \emph{$n$-move} for $n\in\mathbb{N}$.
\end{definition}

\begin{figure}[H]
\center{\includegraphics[width=0.6\textwidth]{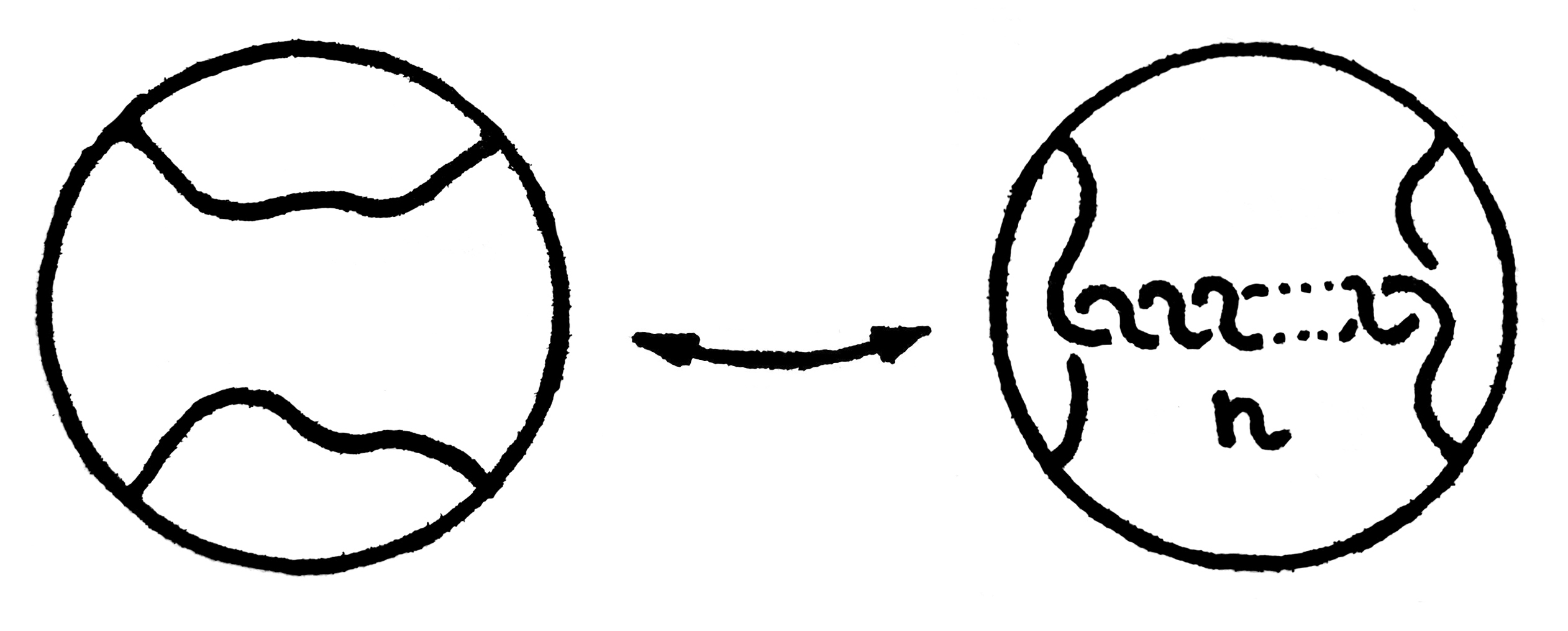}}
\caption{\ti{n-move}} 
\label{Fig14}
\end{figure}

\begin{definition}[Rational $\ra{p}{q}$-moves (\cite{DS00, DP04})] 
\label{rationalmoves} 
A knot transformation $\delta$ is called a \emph{rational move} if there is a rational tangle $T\neq[0]$ such that $\delta$ is equivalent to a local move whose local-move-pattern is $([0],T)$. If $\F(T)=p/q$, then we call $\delta$ the~\mbox{\emph{$\ra{p}{q}$-move}} 
and denote its local-move-pattern by~$\ra{p}{q}$. For example, we denote by~$\G(\ra{p}{q}, M)$ the $\ra{p}{q}$-move Gordian graph, where $M\subset\bL$. 
\end{definition}

\begin{remark}
Note that the $\Hm(2)$-move is equivalent to the $\ra{1}{0}$-move, the \mbox{$\X$-move} is equivalent to the $\ra{2}{1}$-move, and the $n$-move is equivalent to the \mbox{$\ra{n}{1}$-move}. Moreover, it is easy to see that there are equivalent rational moves, with unequal fractions, for example, the \mbox{$\Hm(2)$-move} is also equivalent to the $\ra{1}{1}$-move. 
\end{remark}

\begin{definition}[almost trivial tangle]
An $n$-tangle $B_A$ is called \emph{$n$-string almost trivial} if there is an orientation-preserving homeomorphism of pairs
$$ h\colon (B,A)\rightarrow(D^2\times I,\{x_1,x_2,\dots,x_n\}\times I)$$
where $x_1,x_2,\dots,x_n$ are distinct points on $D^2$. 
\end{definition}

\begin{definition}[almost trivial moves]
\label{almosttrivialmoves} 
A knot transformation $\delta$ is said to be an~{\emph{\mbox{$n$-string} almost trivial move}} if $\delta$ is equivalent to a local move whose local-move-pat\-tern is $(T, S)$, where $T$ and $S$ are $n$-string almost trivial tangles.
\end{definition}

\begin{remark}
Note that all transformations defined above are almost trivial moves.
\end{remark}

\section{Branched covers}\label{sec:branched}
In this section, we recall the concept of a branched covering and construct a cyclic branched covering of the $3$-sphere, branched along a knot, see~\cite{Ro03}, and a cyclic branched covering of a tangle. In addition, we note that the Montesinos trick allows one to obtain estimate for the Gordian distance for any almost trivial move, similar to the result for the $\Hm(n)$-moves in~\cite{HNT90}.

\begin{definition}[branched covering (\cite{Ro03})]
Let $M$ and $N$ be compact $n$-manifolds with $(n-2)$-submanifolds $A\subset M$ and $B\subset N$. Then a continuous func\-ti\-on~$f\colon M\rightarrow N$ is said to be a \emph{branched covering} with branch sets $A$ (upstairs) and $B$ (downstairs) if components of preimages of open sets of $N$ are a basis for the topology of $M$, and $f(A)=B$, $f(M\sm A)=N\sm B$, and $N\sm B$ is exactly the set of points $N$ which are evenly covered, i.e. have neighbourhoods $U$ such that~$f$ sends each component of $f^{-1}(U)$ homeomorphically onto $U$. The restric\-tion $f^\circ\colon M\sm A\rightarrow N\sm B$ is a covering, and, by the compactness of $M$, it is finite-sheeted. Each branch point $x\in A$ has a branching index $p$, meaning that~$f$ is $p$-to-one near $x$, and this number is constant on components of $A$. We call $f$ a~\emph{$p$-fold branched covering} if $f^\circ$ is a $p$-fold covering.
\end{definition}

\begin{definition}[$p$-fold cyclic branched cover of a knot (\cite{Ro03})]
\label{fcbc}
Let $K$ be a knot in~$S^3$, let $M$ be a Seifert surface for $K$, let \mbox{$M^\circ=M\sm K$}, let \mbox{$\N\colon M^\circ\times (-1,1)\rightarrow S^3$} be a bicollar of $M^\circ$. Let 
$$N^+=\N(M^\circ\times(0,1)),\,\,\,\,\,N^-=\N(M^\circ\times(-1,0)),$$
$$N=\N(M^\circ\times(-1,1)),\,\,\,\,\,Y=S^3\sm M,\,\,\,\,\, X=S^3\sm K.$$
Let $(N_i,N_i^{+},N_i^{-})$  be a copy of the triple $(N,N^{+},N^{-})$, and let $(Y_i,N_i^{+},N_i^{-})$ be a copy of the triple $(Y,N^{+},N^{-})$, where \mbox{$i\in\{0,1,\dots,p-1\}$}. Let $$N^\circ=\bigcup_{i=0}^{p-1}N_i\,\,\, and\,\,\,Y^\circ=\bigcup_{i=0}^{p-1}Y_i$$ be the disjoint unions. Now we identify $N_{i}^{+}\subset Y_i$ with \mbox{$N_{i}^{+}\subset N_i$} by the identity homeomorphism, and identify $N_{i}^{-}\subset Y_i$ with $N_{i+1}^{-}\subset N_{i+1}$ in a similar way for each \mbox{$i\in\{0,1,\dots,p-2\}$}, and identify $N_{p-1}^{-}\subset Y_{p-1}$ with $N_{0}^{-}\subset N_{0}$. We denote the resulting space by $X^\circ_p$.
Note that we obtain a $p$-fold cyclic covering, denoted by $f\colon X^\circ_p\rightarrow S^3\sm K $. Let $W$ be an open tubular neighbourhood of $K$ in~$S^3$. Let $f_\circ\colon f^{-1}(S^3\sm W )\rightarrow S^3\sm W$ be a $p$-fold cyclic covering obtained by the corresponding restriction of $f$. Note that $f_\circ^{-1}(\partial(\Cl W)$ is a torus, and~$f_\circ^{-1}(m)$ is a single curve on $f_\circ^{-1}(\partial\Cl W)$, where $m$ is the meridian of $\partial (\Cl W)$. We can attach a solid torus $S^1\times D^2$ to $f^{-1}(S^3\sm W )$ by gluing along the boundary in such a way that the meridian of $S^1\times D^2$ is glued to $f_\circ^{-1}(m)$. We obtain a closed connected orientable \mbox{$3$-manifold} denoted by $\Sigma_p(K)$. We can extend the covering map to a \emph{$p$-fold cyclic branched covering} $\Sigma_p(K)\rightarrow S^3$ by sending $S^1\times D^2$ onto~$\Cl W=S^1\times D^2$ by the product of the maps $z\rightarrow \sfrac{z^p}{|z^{p-1}|}$ on $D^2$ and identity on $S^1$. The branch set is $K$ in $S^3$ downstairs and some knot in $\Sigma_p(K)$ upstairs. 
\end{definition}

\begin{remark}
Let $K$ be a knot in $S^3$. We denote by $\e_p(K)$ the minimum number of generators of the first homology group with integer coefficients of the \mbox{$p$-fold} cyclic branched cover of $S^3$ branched along $K$ denoted by ${\bf H}_1(\Sigma_p(K))$.
\end{remark}

\begin{lemma}[double branched cover of a rational knot (\cite{Kaw96})]
\label{lem:dbc}  
Let $T$ be a rational tangle such that $\F(T)=p/q,$ where $p$ and $q$ are relatively prime. Then the double branched cover $\Sigma_2(\N(T))$ is the lens space $\Ll(p,q)$. Note that then $\e_2(\N(T))=1,$
since in this case ${\bf H}_1(\Sigma_2(\N(T)))=\mathbb{Z}_{p}.$
\end{lemma}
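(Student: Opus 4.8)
The plan is to cut $\N(T)$ along the sphere bounding the base ball of $T$, lift the resulting decomposition to the double branched cover, and recognize $\Sigma_2(\N(T))$ as a union of two solid tori --- hence a lens space --- whose gluing I then match with $\Ll(p,q)$. Write $S^3 = B_1 \cup_{S^2} B_2$, where $B_1$ is the base ball of $T$ and $B_2 = \Cl(S^3 \sm B_1)$ carries the two numerator arcs, which form a trivial $2$-tangle $T_0$. The branch set $\N(T)$ meets $B_1$ in the strings of $T$ and meets $B_2$ in $T_0$, and the two pieces share the boundary $S^2$, punctured at the four tangle endpoints. Passing to double branched covers, this exhibits $\Sigma_2(\N(T)) = V_1 \cup_{\Sigma} V_2$, where $V_i$ is the double branched cover of the pair $(B_i,\,\cdot\,)$ and $\Sigma$ is the double branched cover of the $4$-punctured sphere.

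First I would set up the building blocks. The double branched cover of the trivial tangle $(D^2 \times I, \{x,y\} \times I)$ is a solid torus: the double cover of $D^2$ branched over two points is an annulus (an Euler-characteristic count gives $\chi = 0$ with two boundary circles), and crossing with $I$ yields $S^1 \times D^2$. Because $T$ is rational, the defining homeomorphism of pairs $(B_1,A) \to (D^2 \times I, \{x,y\} \times I)$ sends branch locus to branch locus, so it lifts to a homeomorphism of double branched covers; hence $V_1$ is a solid torus, and so is $V_2$ (its tangle $T_0$ is trivial). Likewise $\Sigma$ is a torus, being the double branched cover of $S^2$ over four points (the pillowcase). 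Thus $\Sigma_2(\N(T))$ is an honest union of two solid tori glued along a common boundary torus, so it is $S^3$, $S^1 \times S^2$, or a lens space.

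The crux, and the step I expect to fight with, is pinning down the gluing. Isotopy classes of essential simple closed curves on the $4$-punctured sphere are parametrized by slopes in $\mathbb{Q} \cup \{\infty\}$ in the same way rational tangles are, and each lifts to an essential curve on the torus $\Sigma$, identifying the relevant slopes with a basis of $\Hm_1(\Sigma)$. The meridian of $V_1$ is the lift of the curve on $(S^2,\partial T)$ that bounds a disk inside $(B_1,T)$; by the very construction of a rational tangle this curve has slope $\F(T) = p/q$, while the meridian of $V_2$ is the lift of the slope carried by the trivial closure $T_0$. One then computes, by tracking lifts through the branched cover of the pillowcase, that the meridian of $V_2$ becomes a $(p,q)$-curve on $\partial V_1$, so that the gluing produces exactly $\Ll(p,q)$. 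The real difficulty here is purely bookkeeping: one must fix consistent orientation and framing conventions for tangle slopes and their lifts, and check that the closure gives the $(p,q)$-gluing rather than some $(p,q')$; the residual ambiguity $q \mapsto q^{\pm1} \bmod p$ intrinsic to $\Ll(p,q)$ is harmless and in fact mirrors the classification of rational knots recorded above. I would organize the computation by induction on the length of the continued fraction of $T$ (equivalently, on the number of elementary twists building $T$ from $[0]$ or $[\infty]$): each twist acts on slopes by $a/b \mapsto a/b \pm 1$ or its inverse and, upstairs, by a Dehn twist of $\Sigma$, and the slope convention is pinned down by the base cases, namely that $\N([\infty]) = \U$ must lift to $S^3$ and $\N([0])$, the $2$-component unlink, must lift to $S^1 \times S^2$.

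Finally, the homology assertion follows from the standard genus-one Heegaard description: for $V_1 \cup_\phi V_2 = \Ll(p,q)$ a Mayer--Vietoris (or cellular) computation gives $\Hm_1(\Ll(p,q)) = \mathbb{Z}_p$. Since $\mathbb{Z}_p$ is cyclic it is generated by a single element, whence $\e_2(\N(T)) = 1$, as claimed.
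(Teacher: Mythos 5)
The paper gives no proof of this lemma at all: it is quoted from \cite{Kaw96} as a known fact, so there is no internal argument to compare yours against. Your proposal is the standard textbook proof and is essentially correct: split $S^3$ along $\partial B_1$ into the tangle ball and the closure ball; note that the double branched cover of a trivial $2$-tangle is (annulus)$\times I$, a solid torus, and that the defining homeomorphism of pairs for a rational tangle lifts, so both pieces cover to solid tori glued along the pillowcase torus; conclude that $\Sigma_2(\N(T))$ has a genus-one Heegaard splitting. This is also fully consistent with the machinery the paper itself builds in the proof of the Lower Estimates Lemma, where separating disks of rational tangles lift to meridian disks of the covering solid torus. The slope bookkeeping you flag as the hard step is indeed where all the content sits, and your plan for it is the right one: induct on the continued-fraction length, realize each elementary twist as a Dehn twist of the covering torus, and pin down conventions via the base cases $\N([\infty])=\U\mapsto S^3=\Ll(1,0)$ and $\N([0])=$ the two-component unlink $\mapsto S^1\times S^2=\Ll(0,1)$; the residual ambiguity $q\mapsto q^{\pm1}\ (\Mod\ p)$ is absorbed by the homeomorphism classification of lens spaces, as you say. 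One small caveat: the final claim $\e_2(\N(T))=1$ fails when $p=\pm1$, since then ${\bf H}_1(\Sigma_2(\N(T)))$ is trivial and needs zero generators. That imprecision is already present in the statement of the lemma, and the paper silently guards against it downstream by checking $p+q>1$ (resp.\ $p+2q>1$) before invoking the conclusion; your proof simply inherits the same harmless gap, and it would be worth adding the hypothesis $|p|>1$ explicitly.
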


\begin{definition}[$p$-fold cyclic branched cover of a rational tangle]
To begin with, we construct a $2$-fold branched covering for $[\infty]=B_C$ as follows. We cut the ball $B$ as shown in Figure~\ref{Fig15}, after which we glue two copies of the resulting cut ball together as shown in Figure~\ref{Fig15} (to simplify visualization, we further depict tangles in Figures not as balls, but as cubes with strings). Thus we obtain a solid torus with two distinguished branching arcs. Assuming that $f$ is identical on both copies of the cut ball and taking into account the gluing rule it is easy to see that~$f\colon S^1\times D^2\rightarrow B$ is a $2$-fold branched cover of $B$ branched over $C$ downstairs and two arcs upstairs.

We can construct a $2$-fold branched covering of any other rational tangle similarly by properly choosing the two discs along which we need to cut and glue. Figure~\ref{Fig16} shows the corresponding disks for $\frac{1}{[2]}$. A \emph{$p$-fold cyclic branched covering} is also constructed similarly, but we need to glue not two, but $p$ copies of the cut ball in a cyclic order.
\end{definition}

\begin{figure}[H]
\center{\includegraphics[width=0.7\textwidth]{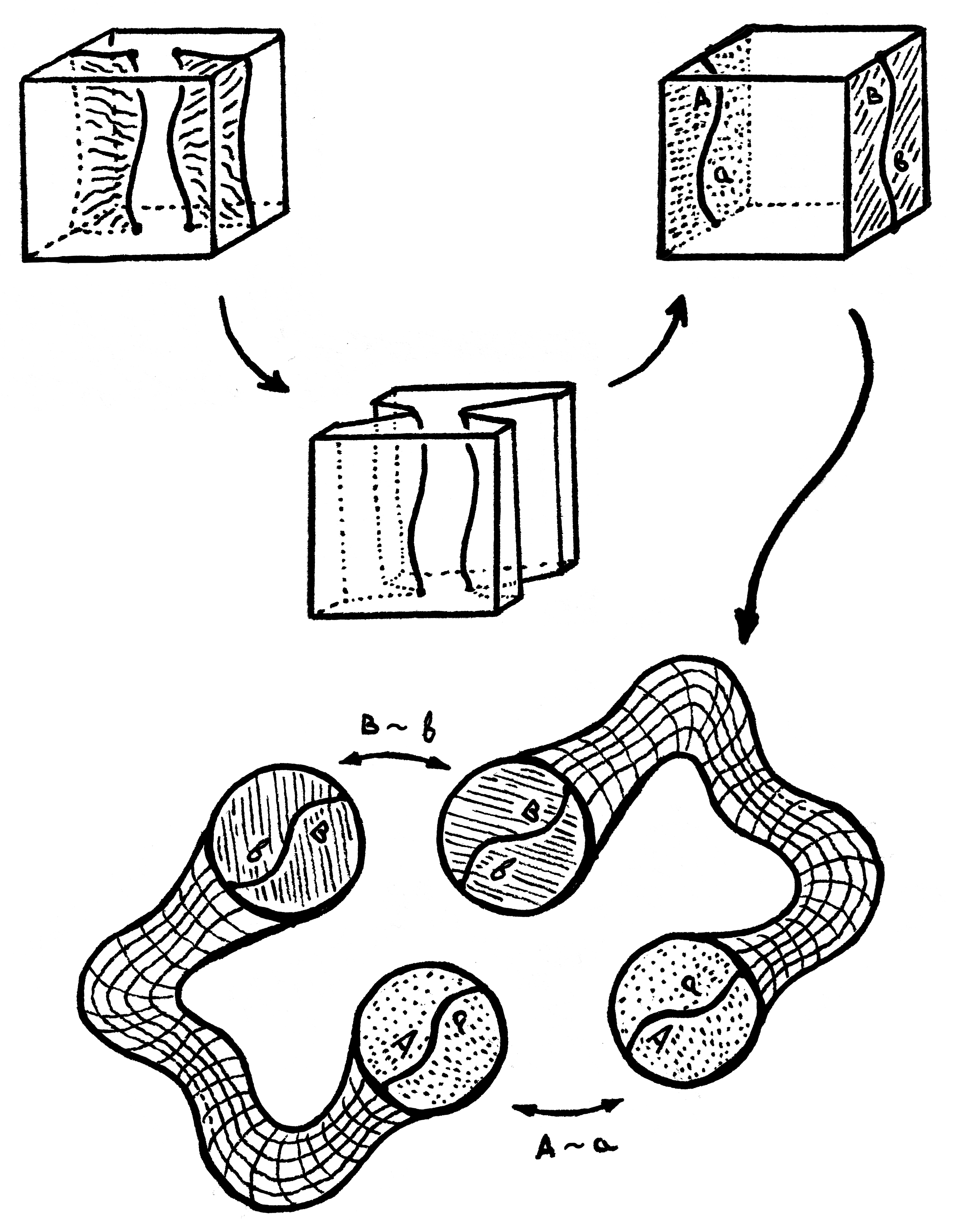}}
\caption{\ti{2-fold branched cover for $[\infty]$}} 
\label{Fig15}
\end{figure}

\begin{namedlem}[The Lower Estimates Lemma]
\label{anothertanglebr}
Let $\delta$ be $n$-string almost trivial move, let $K$ and $Q$ be knots, and let $p$ be integer such that $p\ge 2$. Then we have
$$\dt_{\delta}(K, Q)\ge\frac{|\e_p(K)-\e_p(Q)|}{(n-1)(p-1)}.$$
\end{namedlem}

\begin{proof}
Note that for each rational tangle $B_C$ there is such a disk $D^{\circ}$ embedded in~$B$ that "separates" the strings of this tangle, that is such that $\partial B\cap D^{\circ}=\partial D^{\circ},$ and $D^{\circ}$ does not intersect the arcs of the tangle, and each component of ${B}\sm D^{\circ}$ contains exactly one string of this tangle. Figure~\ref{Fig16} shows a separating disk for~$[\infty]$. It is easy to see that the $2$-fold branched covering of any rational tangle covers the boundary circle of the separating disk of this tangle by two meridians of~$\partial(S^1\times D^2)$. 
On the other hand, the $2$-fold branched covering of a rational tangle~$B_A$ covers the boundary circle of the separating disk of a rational tan\-gle~${B}_C$ as a curve on the sphere $\partial {B}$ by two identical torus knots lying on~$\partial(S^1\times D^2)$. In the $2$-fold branched cover of $B_C$, these two torus knots correspond to meridians and are the boundaries of the meridian disks. Figure~\ref{Fig17} shows a pair of torus knots covering the boundary circle of a separating disk for $\frac{1}{[2]}$ under the $2$-fold branched covering of $[\infty]$. Thus, if a knot in $S^3$ is changed by a rational move, then the $2$-fold branched cover of $S^3$ branched over this knot is changed by appropriate Dehn surgery. 

\begin{figure}[H]
\center{\includegraphics[width=0.7\textwidth]{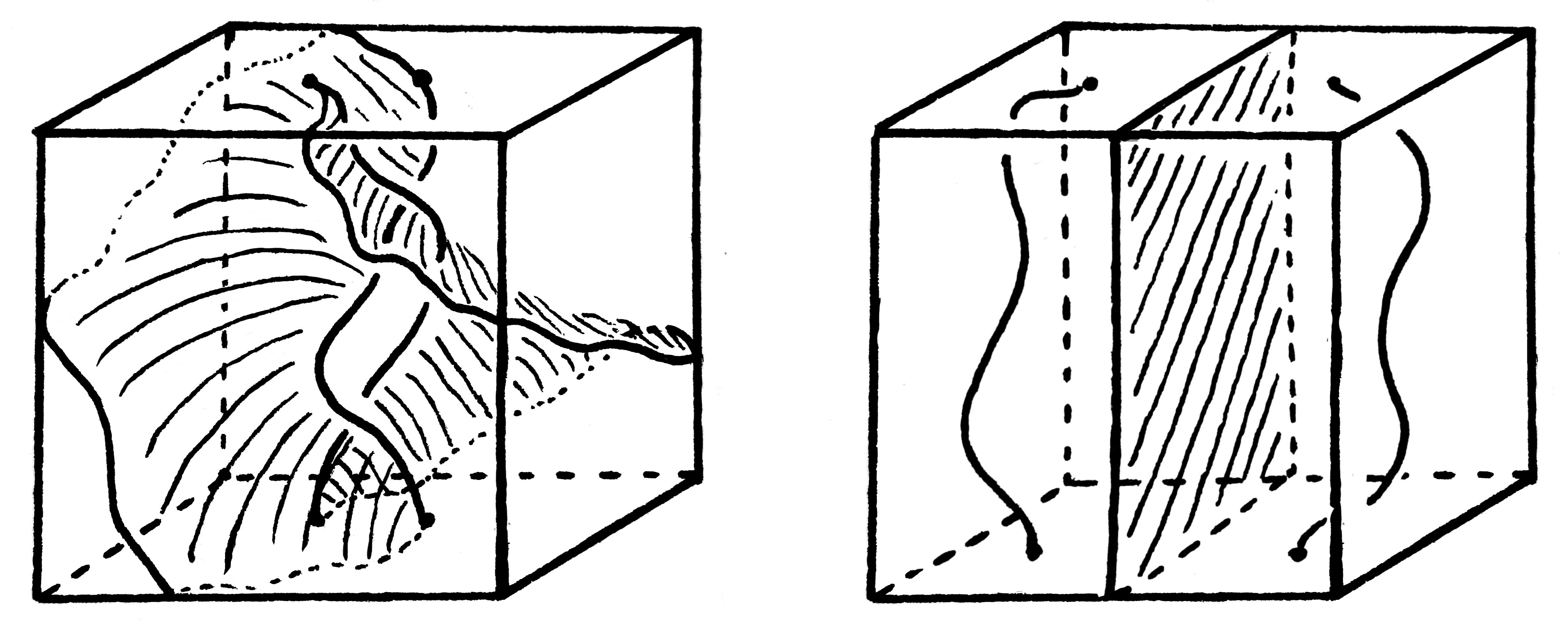}}
\caption{\ti{Disks for $\frac{1}{[2]}$ and the separating disk of $[\infty]$}} 
\label{Fig16}
\end{figure}

It is easy to see that a single Dehn surgery changes the minimum number of generators of the first homology group of a manifold by at most one, and therefore we have $$\dt_{\ra{p}{q}}(K, Q)\ge|\e_2(K)-\e_2(Q)|$$ for any $\ra{p}{q}$-move and any knots $K$ and $Q$. Moreover, if a knot in $S^3$ is changed by an $n$-string almost trivial move, then the \mbox{$p$-fold} branched cover of $S^3$ branched over this knot is changed by appropriate surgery on a handlebody of genus $(n-1)(p-1)$, and therefore we have $$\dt_{\delta}(K, Q)\ge\frac{|\e_p(K)-\e_p(Q)|}{(n-1)(p-1)}$$ for any $n$-string almost trivial move $\delta$ and any knots $K$ and $Q$ (cf.~\cite{HNT90}).
\end{proof}

\begin{figure}[H]
\center{\includegraphics[width=0.8\textwidth]{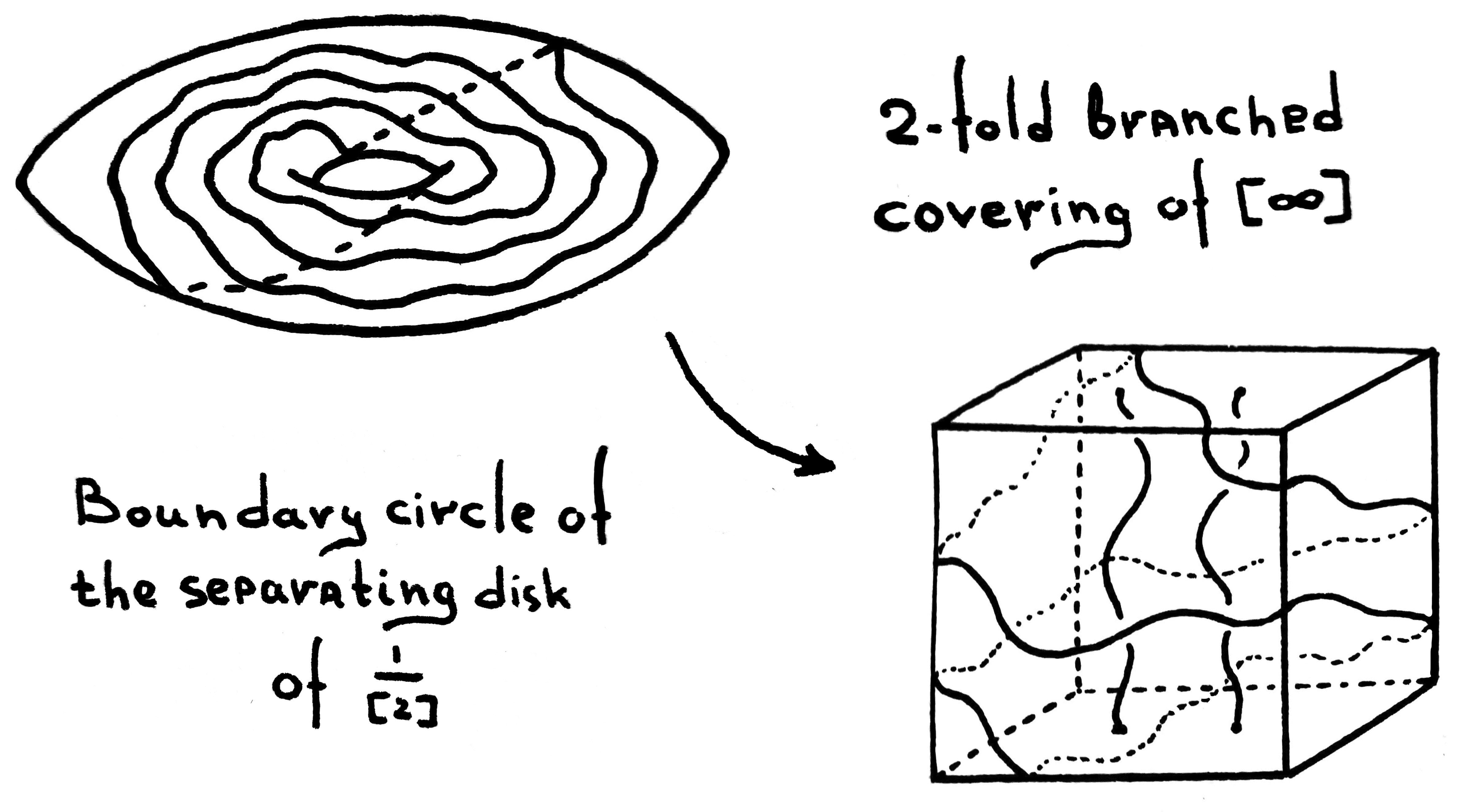}}
\caption{\ti{{The boundary circle of the separating disk of~$\frac{1}{[2]}$ and the covering of $[\infty]$}}} 
\label{Fig17}
\end{figure}

\section{The Alexander polynomial}\label{sec:alexander}
In this section, we recall definitions of two classical polynomial invariants of links, namely the Alexander polynomial, see~\cite{Al28}, and the Conway polynomial, see~\cite{Con70}. We also show the connection of these invariants with each other and with~$2$-fold cyclic branched coverings of $S^3$, branched along knots, see~\cite[p.~206]{Ro03}.

\begin{definition}[Alexander polynomial (\cite{Al28}, \cite{Ro03})]
Let $K$ be a knot in $S^3$. Denote by $X^{\infty}(K)$ the infinite cyclic covering space of the complement of $K$ (its construction is completely similar to the construction of a finite cyclic covering space, except that we need to glue countably many copies of $N_i$ and $Y_i$, identifying $N_i^+\subset Y_i$ with $N_{i}^+\subset N_i$ and identi\-fy\-ing~$N_i^-\subset Y_i$ with $N_{i+1}\subset N_{i+1}$ for each~$i\in\mathbb{Z}$, see construction and notation in Definition~\ref{fcbc}), and denote by $\Lambda$ the ring of finite Laurent polynomials with integer coefficients. Let us define the product of \mbox{$p(t)\in\Lambda$} and $\zeta\in{\bf H}_1(X^\infty(K))$ by the formula
$$p(t)\zeta=c_{-r}\tau_\ast^{-r}\zeta+\dots+c_{-1}\tau_\ast^{-1}\zeta+c_0\zeta+c_1\tau_\ast\zeta+\dots+c_s\tau_\ast^s\zeta,$$
where $\tau\colon X^\infty(K)\rightarrow X^\infty(K)$ is one of the two generators of the group of covering translations, ${\bf \tau}_\ast\colon{\bf H}_1(X^\infty(K))\rightarrow{\bf H}_1(X^\infty(K)) $ is the homology isomorphism induced by $\tau$, and
$$ p(t)=c_{-r}t^{-r}+\dots+c_{-1}t^{-1}+c_0+c_1t+\dots+c_st^s.$$
It is easy to see that this multiplication does not depend on the choice of $\tau$ and defines an $\Lambda$-module structure on ${\bf H}_1(X^\infty(K))$ and this module is called the~\emph{Alexander invariant}. If the order ideal of a presentation matrix for the Alexander invariant is principal then any generator of this ideal is called the \emph{Alexander polynomial}. Note that all these polynomials are equal up to multiplication by a monomial, so we can fix a polynomial with a positive constant term. In what follows, the Alexander polynomial is understood to be precisely this polynomial, which we denote by~$\Delta_K(t)$. Moreover, the order ideal does not depend on the choice of a presentation matrix, the Alexander polynomial always exists and is a knot invariant, see~\cite[p.~207]{Ro03}.
\end{definition}

\begin{lemma}[Alexander polynomial and double branched cover (\cite{Ro03})]
\label{card}
 For any knot~$K$ the group ${\bf H}_1(\Sigma_2(K))$ is finite and $|{\bf H}_1(\Sigma_2(K))|=|\Delta_K(-1)|$. 
\end{lemma}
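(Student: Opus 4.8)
The plan is to express ${\bf H}_1(\Sigma_2(K))$ as a quotient of the Alexander invariant $A := {\bf H}_1(X^\infty(K))$ and then to read off its order by specializing at $t=-1$. Recall that for a knot the $\Lambda$-module $A$ admits a \emph{square} presentation matrix $P(t)$ over $\Lambda$, whose determinant equals $\Delta_K(t)$ up to a unit $\pm t^{k}$; concretely one may take $P(t)=tV-V^{T}$ for a Seifert matrix $V$ built from a Seifert surface of $K$, and this recovers the module of the definition above. I will use this presentation twice: at $t=1$, where $\det P(1)=\pm1$, and at $t=-1$, where $\det P(-1)$ equals $\Delta_K(-1)$ up to sign.

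First I would relate the branched cover to $A$. Following Definition~\ref{fcbc} with $p=2$, the space $\Sigma_2(K)$ is obtained from the unbranched double cover $X_2=X^\infty(K)/\tau^{2}$ of $S^3\sm K$ by gluing a solid torus whose meridian is identified with the lift $f_\circ^{-1}(m)$ of the meridian $m$. The Milnor (Wang-type) exact sequence of the infinite cyclic cover, together with the fact that $\tau_\ast$ acts trivially on ${\bf H}_0$, yields a short exact sequence
$$0 \to A/(t^{2}-1)A \to {\bf H}_1(X_2) \to \mathbb{Z} \to 0,$$
in which the $\mathbb{Z}$ is generated by a meridional lift. Filling the solid torus kills exactly this generator, giving an isomorphism ${\bf H}_1(\Sigma_2(K))\cong A/(t^{2}-1)A$.

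Next I would compute the order of this quotient. Since $\det P(1)=\pm1$, the integer matrix $P(1)$ is invertible over $\mathbb{Z}$, so $A/(t-1)A=\operatorname{coker} P(1)=0$; hence multiplication by $(t-1)$ is a surjective endomorphism of the Noetherian module $A$ and therefore an automorphism. This automorphism carries the subgroup $(t+1)A$ isomorphically onto $(t-1)(t+1)A=(t^{2}-1)A$, and so induces an isomorphism $A/(t^{2}-1)A\cong A/(t+1)A$. Using $A=\operatorname{coker} P(t)$ and the identification $\Lambda/(t+1)\cong\mathbb{Z}$ sending $t\mapsto-1$, I get $A/(t+1)A\cong\operatorname{coker} P(-1)$, an abelian group presented by the integer matrix $P(-1)$, whose order is $|\det P(-1)|=|\Delta_K(-1)|$ as soon as $\det P(-1)\neq0$.

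Finally I would settle finiteness. Writing $\Delta_K(t)=\sum_i c_i t^{i}$, we have $\Delta_K(-1)\equiv\sum_i c_i=\Delta_K(1)=\pm1\pmod 2$, so $\Delta_K(-1)$ is odd and in particular nonzero; thus $\det P(-1)\neq0$, the cokernel is finite, and $|{\bf H}_1(\Sigma_2(K))|=|\Delta_K(-1)|$. I expect the first step to be the main obstacle: justifying the exact sequence and, above all, verifying that the branch-filling kills \emph{exactly} the meridional $\mathbb{Z}$ and introduces no further relations, which requires careful bookkeeping with the explicit construction of Definition~\ref{fcbc}. A route that sidesteps this is the purely Seifert-matrix computation carried out in~\cite{Ro03}: one shows directly, by a Mayer--Vietoris argument on $S^3$ cut along the Seifert surface, that ${\bf H}_1(\Sigma_2(K))\cong\mathbb{Z}^{2g}/(V+V^{T})\mathbb{Z}^{2g}$ and that $\det(V+V^{T})$ equals $\Delta_K(-1)$ up to sign.
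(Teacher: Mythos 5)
The paper does not prove this lemma at all: it is quoted as a known result with a citation to Rolfsen~\cite{Ro03}, so there is no in-paper argument to compare against. Your proof is correct and is one of the two standard derivations. The chain $A$ has a square presentation $P(t)=tV-V^{T}$, the Milnor sequence gives $0\to A/(t^{2}-1)A\to {\bf H}_1(X_2)\to\mathbb{Z}\to 0$ with the $\mathbb{Z}$ generated by the lifted meridian, filling the branch torus kills that generator, $\Delta_K(1)=\pm1$ makes $t-1$ act as an automorphism so that $A/(t^{2}-1)A\cong A/(t+1)A\cong\operatorname{coker}P(-1)$, and the parity argument $\Delta_K(-1)\equiv\Delta_K(1)\pmod 2$ gives finiteness --- all of these steps are sound, and you correctly flag the one place needing real bookkeeping, namely that the Dehn filling imposes no relations beyond killing the meridional $\mathbb{Z}$ (this follows from a Mayer--Vietoris argument for $X_2$ together with the solid torus, using that the lifted meridian maps to a generator of the $\mathbb{Z}$ quotient). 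The route actually taken in~\cite{Ro03} is the one you sketch in your last sentence: cut $S^3$ open along a Seifert surface, reassemble two copies, and compute ${\bf H}_1(\Sigma_2(K))\cong\mathbb{Z}^{2g}/(V+V^{T})\mathbb{Z}^{2g}$ directly by Mayer--Vietoris, with $\det(V+V^{T})=\pm\Delta_K(-1)$. That computation is more elementary (no Alexander module, no Milnor sequence) and is self-contained at the level of Seifert matrices; your module-theoretic version buys generality, since the same argument with $(t^{p}-1)$ in place of $(t^{2}-1)$ handles all cyclic branched covers $\Sigma_p(K)$ at once, which is in the spirit of how the paper uses $\e_p$ elsewhere.
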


\begin{definition}[Conway polynomial (\cite{Con70})]
Let $\nabla\colon\bL^\circ\rightarrow\mathbb{Z}[z]$ be a function with the following two properties:

\begin{itemize}
    \item $\nabla(\U^\circ)(z)=1$, where $\U^\circ$ is the oriented unknot;
    \item let $L_+$, $L_-$, $L_0$ be oriented links such that there are oriented links $L_+'$, $L_-'$, and $L_0'$ in $S^3$ and a $3$-ball $B$ in $S^3$ such that $L_+'$, $L_-'$, and $L_0'$ are ambient isotopic to $L_+$, $L_-$, and $L_0$, respectively, $L_+'$, $L_-'$, and $L_0'$ coincide outside the interior of $B$ in $S^3$, and $L_+'$, $L_-'$, and $L_0'$ intersect with $B$ as shown in Figure~\ref{Fig18}. Then we have 
    $$\nabla(L_+)(z)-\nabla(L_-)(z)=z\nabla(L_0)(z);$$ 
\end{itemize}

It is well known, see~\cite{Con70}, that there is only one such function, that is, it is a well-defined invariant of oriented links. If \mbox{$L\in\bL^\circ$}, then $\nabla(L)(z)$ is called the~\emph{Conway polynomial} of $L$. In addition, it is known that the Conway polynomial $\nabla(K)(z)$ does not depend on orientation if $K$ is a knot so it is also a well-defined invariant for unoriented knots. 
\end{definition}

\begin{figure}[H]
\center{\includegraphics[width=0.65\textwidth]{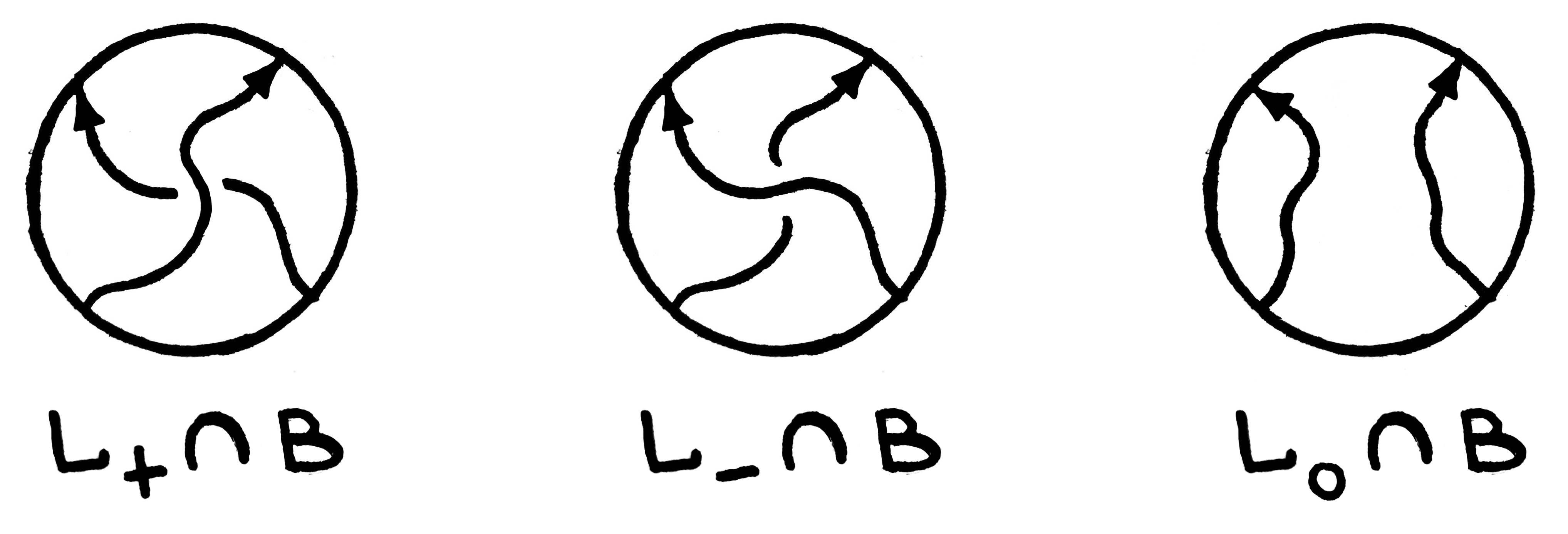}}
\caption{\ti{Intersection with $B$}} 
\label{Fig18}
\end{figure}

\begin{lemma}[Changing the variable (\cite{Con70})]
\label{changev}
Let $K$ be a knot in $S^3$. Then we have $$\Delta_K(t^2)=\nabla(K)(t-t^{-1}).$$
\end{lemma}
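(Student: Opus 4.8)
The plan is to reduce the identity to the uniqueness of the Conway polynomial. Both sides are knot invariants, and $\nabla(K)$ is characterized by its two defining axioms, so it suffices to show that the function $K\mapsto\Delta_K(t^2)$, read in the variable $z=t-t^{-1}$, satisfies those same two axioms. The normalization axiom is immediate, since $\Delta_{\U}(t^2)=1$. All the content therefore lies in verifying that the Alexander polynomial obeys the skein relation
$$\Delta_{L_+}(t)-\Delta_{L_-}(t)=(t^{1/2}-t^{-1/2})\,\Delta_{L_0}(t),$$
because substituting $t\mapsto t^2$ sends the coefficient $t^{1/2}-t^{-1/2}$ to exactly $t-t^{-1}=z$, matching the Conway skein relation from the definition of $\nabla$.

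First I would pass from the module description of $\Delta_K$ to the Seifert-matrix formula $\Delta_K(t)=\det(t^{1/2}V-t^{-1/2}V^{T})$, where $V$ is a Seifert matrix of $K$ associated with a Seifert surface; this is the standard presentation of the Alexander invariant coming from a Seifert surface, and it pins down the symmetric normalization with positive constant term. Next, for a skein triple $L_+,L_-,L_0$ I would choose compatible Seifert surfaces: the surfaces for $L_\pm$ can be taken to arise from a surface for $L_0$ by attaching a single band at the crossing, so that their Seifert matrices $V_+,V_-$ are both obtained from the matrix $V_0$ of $L_0$ by adjoining one extra row and column, and $V_+,V_-$ differ only in the single new diagonal entry (by $1$). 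Expanding $\det(t^{1/2}V_\pm-t^{-1/2}V_\pm^{T})$ by cofactors along this new row and column and subtracting, the bordered minors cancel and what survives is precisely $(t^{1/2}-t^{-1/2})\det(t^{1/2}V_0-t^{-1/2}V_0^{T})$, which is the asserted skein relation.

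With the skein relation in hand, I would define $G(L)(z):=\Delta_L(t^2)$ under the substitution $z=t-t^{-1}$ and check that it is a genuine polynomial in $z$; this follows from the symmetry $\Delta_L(t)\doteq\Delta_L(t^{-1})$, which makes $\Delta_L(t^2)$ a polynomial in $t^2+t^{-2}=z^2+2$, hence in $z^2$. Then $G(\U)=1$ and $G(L_+)-G(L_-)=(t-t^{-1})\,G(L_0)=z\,G(L_0)$, so $G$ satisfies both Conway axioms; by the uniqueness stated in the definition of the Conway polynomial, $G=\nabla$, which is exactly $\Delta_K(t^2)=\nabla(K)(t-t^{-1})$. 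The main obstacle is the middle step: controlling how the Seifert matrix changes across a skein triple and carrying out the cofactor bookkeeping so that only the factor $t^{1/2}-t^{-1/2}$ remains. Normalization requires equal care, since the raw determinant is defined only up to multiplication by a unit $\pm t^{k}$, and the signs and powers must be fixed so that the two sides agree exactly rather than merely up to a unit.
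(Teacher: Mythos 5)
The paper offers no proof of this lemma at all: it is quoted from Conway's paper, so there is no internal argument to compare yours against. Your proposal is the standard textbook derivation --- characterize $\nabla$ by its two axioms and verify them for $L\mapsto\Delta_L(t^2)$ using the Seifert-matrix formula $\det(t^{1/2}V-t^{-1/2}V^{T})$ and the one-extra-band comparison of Seifert surfaces across a skein triple --- and it is essentially correct. Two points deserve more care than you give them. First, the skein recursion forces you to evaluate $\Delta$ on links (already $L_0$ in a skein triple based at a knot is a two-component link), whereas the paper defines $\Delta_K$ only for knots via the infinite cyclic cover; your determinant formula silently supplies the needed extension to oriented links (the Alexander--Conway potential function), and this should be stated explicitly, since otherwise the axiom $G(L_+)-G(L_-)=z\,G(L_0)$ is not even a well-formed statement, and the polynomiality of $G(L)$ in $z$ must be checked for links as well (where the relevant symmetry is $\Delta_L(t^{-1})=(-1)^{c-1}\Delta_L(t)$ with $c$ the number of components, giving odd powers of $z$ rather than even ones). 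Second, as you yourself flag at the end, the identity holds exactly only for the symmetric (Conway-normalized) representative of $\Delta_K$; with the paper's normalization by positive constant term the two sides agree only up to a power of $t$ --- for the trefoil, $\nabla(3_1)(t-t^{-1})=t^{2}-1+t^{-2}$ while $\Delta_{3_1}(t^{2})=t^{4}-t^{2}+1$. Your phrase ``symmetric normalization with positive constant term'' conflates these two conventions; the discrepancy is harmless for the paper's only use of the lemma (the value $|\Delta_K(-1)|$ in the proof of Theorem~\ref{thm2}), but a clean proof should fix one normalization and state the identity up to the appropriate unit.
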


\section{Proofs of the Main Theorems}\label{sec:main}
In this section, we state and prove the Path Shifting Lemma, state and prove the Basic Lemma, and prove Theorem~\ref{thm1}, Theorem~\ref{thm2}, Theorem~\ref{thm3}, and Theorem~\ref{thm4}.

\begin{namedlem}[The Path Shifting Lemma] 
\label{shift}
Let $\delta$ be a local move. Let $W$ be an arbitrary knot in $S^3$, and let $\gamma$ be a path in $\G(\delta,\bK)$ such that $\{V_0,V_1,\dots,V_n\}$ is the set of vertices of $\gamma$, $\{e_1,e_2,\dots,e_n\}$ is the set of edges of $\gamma$, and $e_i$ is incident to~$V_i$ and $V_{i-1}$, where $i\in\{1,2,\dots,n\}$. Then there is a path $\gamma'$ in $\G(\delta,\bK)$, such that~$\{V_0',V_1',\dots,V_n'\}$ is the set of vertices of $\gamma'$, $\{e_1',e_2',\dots,e_n'\}$ is the set of edges of $\gamma'$, $V_i'$ is a connected sum of $V_i$ and $W$, and $e_i'$ is incident to $V_i'$ and $V_{i-1}'$, where~$i\in\{1,2,\dots,n\}$. In other words, any path in a local-move Gordian graph can be "shifted" by some given knot.
\end{namedlem}

\begin{proof}
First, we prove the Lemma in the case of a single-edge path. Let $K$ and~$Q$ be knots in $S^3$ corresponding to adjacent vertices of $\G(\delta, \bK)$, and let $W$ be an arbitrary knot in $S^3$. 
Let $B$ be a $3$-ball in $S^3$, and let $B$ be the base ball of $\delta$. By definition, there are knots $K'$ and $Q'$ such that $K'$ is ambient isotopic to $K$, and~$Q'$ is ambient isotopic to $Q$, $K'$ and $Q'$ coincide outside the interior of $B$ in~$S^3$, and the pair $((B,B\cap K'),(B,B\cap Q'))$ coincides with $\delta$. Let $E$ be a $3$-ball in $S^3$ such that $E\cap B=\varnothing$, $\partial E\cap K'$ is two points and $E\cap K'$ is an unknotted arc connecting these two points. Let $E'$ be a $3$-ball in $S^3$ such that $\partial E'\cap W$ is two points and $E'\cap W$ is an unknotted arc connecting these two points. Denote by $S'$ the adjunction space $$\Big(S^3\sm\Int(E)\Big)\bigsqcup_{\ff} \Big(S^3\sm\Int(E')\Big),$$ where $\ff\colon\partial E\rightarrow\partial E'$ is a glueing homeomorphism such that \mbox{$\ff(\partial E\cap K')=\partial E'\cap W$.} Then we introduce the notation
$$K_1=\Big(K'\cap\big(S^3\sm\Int(E)\big)\Big)\cup\Big(W\cap\big(S^3\sm\Int(E')\big)\Big)\subset S'\cong S^3, $$
$$Q_1=\Big(Q'\cap\big(S^3\sm\Int(E)\big)\Big)\cup\Big(W\cap\big(S^3\sm\Int(E')\big)\Big)\subset S'\cong S^3. $$
By construction, $K_1$ is a connected sum of $K$ and $W$, and $Q_1$ is a connected sum of $Q$ and $W$. Moreover, $K_1$ and $Q_1$ coincide outside the interior of $B$, and the pair $((B,B\cap K'),(B,B\cap Q'))$ coincides with $\delta$, which implies that $K_1$ and $Q_1$ correspond to adjacent vertices of $\G(\delta, \bK)$. This argument completes the proof in this case.

We now prove the Lemma in the general case. Let $\gamma$ be a path in $\G(\delta,\bK)$ such that $\{V_0,V_1,\dots,V_n\}$ is the set of vertices of $\gamma$, $\{e_1,e_2,\dots,e_n\}$ is the set of edges of $\gamma$, and $e_i$ is incident to $V_i$ and $V_{i-1}$, where $i\in\{1,2,\dots,n\}$. The Lemma in the case of a single-edge path says that there are adjacent vertices $V_0'$ and $V_1'$ such that $V_0'$ is a connected sum of $V_0$ and~$W$, and $V_1'$ is a connected sum of $V_1$ and~$W$, and there are adjacent vertices $V_1''$ and $V_2'$ such that $V_1''$ is a connected sum of~$V_1$ and $W$, and $V_2'$ is a connected sum of $V_2$ and $W$. Since a connected sum of unoriented knots is not uniquely defined (see Remark after Definition~\ref{consum}), it may be that $V_1'$ and $V_1''$ are not ambient isotopic, but the control over the choice of the gluing homeomorphism in the proof of the Lemma in the case of a single-edge path allows us to assume that the choice of the corresponding connected sums is consistent, that is, $V_1'$ coincides with $V_1''$. Similarly, we can choose each next pair of vertices $(V_{i-1}', V_{i}')$ in such a way that the choice of $V_{i-1}'$ is consistent with the previous choice that defines the pair $(V_{i-2}', V_{i-1}')$. 
This argument completes the proof.
\end{proof}


\begin{namedlem}[The Basic Lemma] 
\label{basiclem}
Let $\delta$ be an $n$-string almost trivial move. Suppose that there is a knot $Q$ such that $Q\in {\Sf}_1^\delta(\U)$ and $\e_2(Q)\geq1$. Then the number of $\BU$-ends of each connected component of $\G(\delta, \bK)$ is equal to one.
\end{namedlem}

\begin{proof}
Let $C$ be a connected component of $\G(\delta,\bK)$, and let $E$ be an arbitrary vertex in $C$. Let us show that for any $r\in\mathbb{N}$ and any knots $K$ and $S$ such that 
$$nr+e\leq\dt_{\delta}(K,E)<\infty\,\,\,and\,\,\,nr+e\leq\dt_{\delta}(S,E)<\infty,$$
where $e=\e_2(E)$, there is a path in $\G(\delta,\bK)$ that connects $K$ and $S$ and does not intersect $\B_{r-1}^{\delta}(E)$. This immediately implies that the number of $\BU$-ends of $C$ is exactly one, since for any $r\in\mathbb{N}$ the complement $C\setminus \B_{r-1}^{\delta}(E)$ contains only one unbounded connected component containing the set $C\sm \B_{nr+e}^{\delta}(E)$. 

\medskip
Let $r\in\mathbb{N}$, and let $K$ be a knot such that \mbox{$nr+e\leq\dt_{\delta}(K,E)<\infty,$} and let~$e_1$ be an edge incident to vertices $\U$ and $Q$. \nameref{shift} says that~$e_1$ can be "shifted" by $Q$, that is, there are a knot $Q^2$ and an edge $e_2$, such that~$Q^2$  is a connected sum of $Q$ and $Q$, and $e_2$ is incident to $Q$ and $Q^2$. Now $e_2$ can be "shifted" in a similar way, that is, there are knots $Q^3$ and $J$, and an edge~$e_3$, such that $Q^3$  is a connected sum of $Q^2$ and~$Q$, $J$ is a connected sum of $Q$ and~$Q$, and $e_3$ is incident to $J$ and $Q^3$. Moreover, as in the proof of \nameref{shift}, we can choose a gluing homeomorphism in such a way that $J$ is ambient isotopic to~$Q^2$. Thus, $e_1$, $e_2$, and $e_3$ form a path in $\G(\delta,\bK)$ connecting $\U$ and $Q^3$. Continuing to "shift" the edges in the same way so that they are consistent with each other, we obtain a path $\gamma$ in $\G(\delta,\bK)$ connecting~$\U$ and~$Q^{(n-1)r+e}$ such that $\{\U=Q^0,Q=Q^1,Q^2,\dots,Q^{(n-1)r+e}\}$ is the set of vertices of $\gamma$, $\{e_1,e_2,\dots,e_{(n-1)r+e}\}$ is the set of edges of $\gamma$, $Q^i$ is a connected sum of~$Q^{i-1}$ and $Q$, and $e_i$ is incident to $Q^i$ and $Q^{i-1}$, where $i\in\{1,2\dots,(n-1)r+e\}$.

\medskip
\nameref{shift} says that $\gamma$ can be "shifted" by $K$, that is, there is a path $\gamma'$ in $\G(\delta,\bK)$, such that $\{K=W_0,W_1,\dots,W_{(n-1)r+e}\}$ is the set of vertices of $\gamma'$, $\{e_1',e_2',\dots,e_{(n-1)r+e}'\}$ is the set of edges of $\gamma'$, $W_i$ is a connected sum of $Q^i$ and $K$, and $e_i'$ is incident to $W_i$ and $W_{i-1}$, where $i\in\{1,2\dots,(n-1)r+e\}$.

\medskip
Note that since $\dt_{\delta}(K, E)<\infty$, there is a path $\zeta$ in $\G(\delta,\bK)$ connecting $E$ and~$K$, such that $\{E=V_0,V_1,\dots,K=V_{m}\}$ is the set of vertices of $\zeta$, $\{f_1,f_2,\dots,f_{m}\}$ is the set of edges of $\zeta$, and $f_i$ is incident to $V_i$ and $V_{i-1}$, where $i\in\{1,2\dots,m\}$. \nameref{shift} says that $\zeta$ can be "shifted" by $Q^{(n-1)r+e}$, that is, there is a path $\zeta'$, such that \mbox{$\{V_0',V_1',\dots,V_{m}'\}$} is the set of vertices of $\zeta'$, $\{f_1',f_2',\dots,f_{m}'\}$ is the set of edges of $\zeta'$, $V_i'$ is a connected sum of $V_i$ and $Q^{(n-1)r+e}$, and $f_i'$ is incident to $V_i'$ and $V_{i-1}'$, where \mbox{$i\in\{1,2\dots,m\}$}.

\medskip
Note that we can choose the gluing homeomorphisms of the "shifts" in such a way that $W_{(n-1)r+e}$ and $V_m'$, each of which is a connected sum of $K$ and $Q^{(n-1)r+e}$, are ambient isotopic. In this case, paths $\gamma'$ and $\zeta'$ form a path connecting $K$ and~$V_0'$.

\medskip
Now we need some technical details. 
\nameref{anothertanglebr} shows that for arbitrary knots $A$ and $B$ we have $$\dt_{\delta}(A, B)\ge\frac{|\e_2(A)-\e_2(B)|}{n-1}.$$ 
Note that if a knot $W$ is a connected sum of knots $A$ and $B$, then it follows from the construction of a $2$-fold branched cover that $\Sigma_2(W)$ is a connected sum of~$\Sigma_2(A)$ and~$\Sigma_2(B)$. This implies that
$${\bf H}_1(\Sigma_2(W))={\bf H}_1(\Sigma_2(A))\oplus{\bf H}_1(\Sigma_2(B)).$$ By construction and by the Schubert theorem (see~\cite{Schu49}, \cite[p.~150]{Ro03}), 
$Q^{i}$ is a connected sum of $i$ copies of $Q$. Then we have
$${\bf H}_1(\Sigma_2(Q^{i}))=\bigoplus_i{\bf H}_1(\Sigma_2(Q)),$$
and therefore $\e_2(Q^i)\geq i$. Note that since each vertex $V_i'$ of the path $\zeta'$ is a connected sum of $V_i$ and $Q^{(n-1)r+e}$, we have
$${\bf H}_1(\Sigma_2(V_i'))={\bf H}_1(\Sigma_2(V_i))\oplus{\bf H}_1(\Sigma_2(Q^{(n-1)r+e}))$$
and therefore $$\dt_{\delta}(V_i',E)\ge\frac{|\e_2(V_i')-e|}{n-1}\ge\frac{|(n-1)r+e-e|}{n-1}\ge r.$$ Moreover, since each vertex $W_i$ of the path $\gamma'$ is a connected sum of $K$ and $Q^i$, we have
$${\bf H}_1(\Sigma_2(W_i))={\bf H}_1(\Sigma_2(K))\oplus{\bf H}_1(\Sigma_2(Q^i)).$$
If $i\ge (n-1)r+e$, then $$\dt_{\delta}(W_i, E)\ge\frac{|\e_2(W_i)-e|}{n-1}\ge\frac{|i-e|}{n-1}\ge\frac{|(n-1)r+e-e|}{n-1}\ge r.$$ Suppose $i<(n-1)r+e$ and $\dt_{\delta}(W_i,E)<r$, then by cons\-truc\-tion of $\gamma'$ and by definition of distance we have $\dt_{\delta}(K,W_i)\le i$ and by the triangle inequality we have
$$nr+e\le \dt_{\delta}(K,E)\le\dt_{\delta}(K,W_i)+\dt_{\delta}(W_i, E)<i+r<(n-1)r+e+r=nr+e,$$
which leads to a contradiction. Therefore if $i<(n-1)r+e$ then it must be~$\dt_{\delta}(W_i, E)\ge r$. Thus, we have obtained that any vertex of the path $\gamma'\cup\,\zeta'$, connecting $K$ and $V_0'$ (where $V_0'$ is a connected sum of $E$ and $Q^{(n-1)r+e}$, and therefore does not depend on the choice of $K$), lies outside the ball $\B_{r-1}^{\delta}(E)$. Note that $K$ is chosen arbitrarily among those such that \mbox{$nr+e\leq\dt_{\delta}(K,E)<\infty$}. This remark completes the proof.
\end{proof}



\begin{proof}[Proof of Theorem 1]

Let us show that for any $\sfrac{p}{q}\in\big(\mathbb{Q}\setminus\{\sfrac{0}{1}\}\big)\cup\{\sfrac{1}{0}\}$ there is a knot $K_{\sfrac{p}{q}}\in \Sf_1^{\ra{p}{q}}(\U)$ such that $\e_2(K_{\sfrac{p}{q}})=1.$ By \nameref{basiclem}, this implies that 
the number of $\BU$-ends of each connected component of $\G(\delta,\bK)$ is equal to one for any rational move $\delta$.

\begin{figure}[H]
\center{\includegraphics[width=0.57\textwidth]{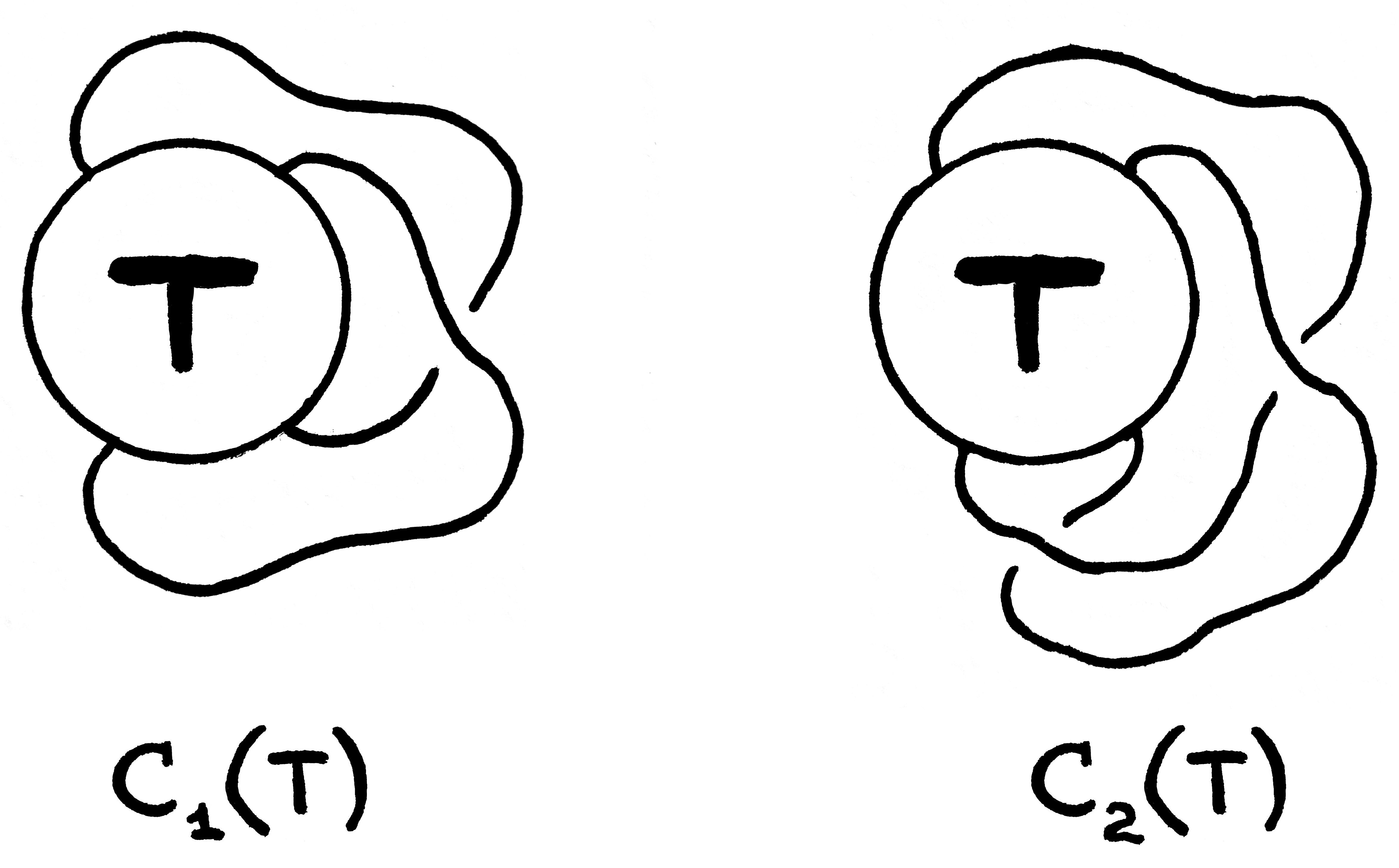}}
\caption{\ti{Closures}} 
\label{Fig19}
\end{figure}

\medskip
First, we introduce two special closures of rational tangles. Figure~\ref{Fig19} shows two closures of a rational tangle~$T$. We denote the results of these closures by~$\C_1(T)$ and~$\C_2(T)$, respectively. It is easy to see that if $\F(T)=p/q$ then $\C_1(T)=\N(S)$ and $\C_2(T)=\N(R),$
where $S$ and $R$ are rational tangles such that $$\F(S)=\frac{p+q}{q},\,\,\,\,\,\,\,\,\,\,\,  \F(R)=\frac{p+2q}{p+q}. $$

\medskip
Let $\sfrac{p}{q}\in\mathbb{Q}\cup\{\sfrac{1}{0}\}$, where $p$ and $q$ are relatively prime, and let $T$ be a rational tangle defining a local-move-pattern of $\ra{p}{q}$-move. Let us find the corresponding knot $K_{\sfrac{p}{q}}$ for all possible cases.

\medskip
\noindent\fbox{Let $pq>0$.} If $p$ is odd and $q$ is odd then we can assume that $K_{\sfrac{p}{q}}=\C_2(T)$ because
        \begin{itemize}
            \item $\C_2(T)=\N(R)$, where $\F(R)=\sfrac{(p+2q)}{(p+q)} $, and in this case $p+2q$ is odd, that is, according to Lemma~\ref{lem:odev}, $K_{\sfrac{p}{q}}$ is a knot,
            \item it is easy to see that $K_{\sfrac{p}{q}}\in \Sf_1^{\ra{p}{q}}(\U)$,
            \item by Lemma~\ref{lem:dbc} we have ${\bf H}_1(\Sigma_2(\N(R)))=\mathbb{Z}_{(p+2q)}$ and therefore $\e_2(K_{\sfrac{p}{q}})=1$, since  $p+2q>1$.
        \end{itemize}

\noindent if $p$ is odd and $q$ is even, or vice versa, then we can assume that $K_{\sfrac{p}{q}}=\C_1(T)$ because
        \begin{itemize}
            \item $\C_1(T)=\N(S)$, where $\F(S)=\sfrac{(p+q)}{q} $, and in this case $p+q$ is odd, that is, according to Lemma~\ref{lem:odev}, $K_{\sfrac{p}{q}}$ is a knot,
            \item it is easy to see that $K_{\sfrac{p}{q}}\in \Sf_1^{\ra{p}{q}}(\U)$,
            \item by Lemma~\ref{lem:dbc} we have ${\bf H}_1(\Sigma_2(\N(S)))=\mathbb{Z}_{(p+q)}$ and therefore $\e_2(K_{\sfrac{p}{q}})=1$, since  $p+q>1$.
        \end{itemize}

\medskip 
\noindent\fbox{Let $pq<0$.} It follows from the definition that $-\F(T)=\F(-T)$, that is, the tangle with fraction $\sfrac{a}{b}$ is a mirror image of the tangle with fraction $\sfrac{-a}{b}$. For clarity, we introduce the notation $\sfrac{p}{q}=\sfrac{-p'}{q'}$, where $p',q'>0$. We can repeat all the previous reasoning for this case by changing all crossings in the constructions of closures $\C_1$ and $\C_2$. Further, fractions $\sfrac{-(p'+2q')}{(p'+q')}$ and $\sfrac{-(p'+q')}{q'}$ appear in a similar way (only the sign changes compared to the previous case).
The numerators of these fractions are also always odd and not equal to $-1$. The corresponding required knot also lies in $ \Sf_1^{\ra{-p'}{q'}}(\U)$ for the same reasons (crossings change does not affect them). And since $\Ll(-a,b)=\Ll(a,-b)$, the proper\-ty~$\e_2(K_{\sfrac{-p'}{q'}})=1$ is also preserved.

\medskip
\noindent\fbox{Let $pq=0$.} In this case, $T$ is $[\infty]$, and $\ra{p}{q}$-move is $\Hm(2)$-move. It is easy to see that in this case we can assume that $K_{\sfrac{p}{q}}$ is the trefoil, denoted by $3_1$. It is well known that ${\bf H}_1(\Sigma_2(3_1))=\mathbb{Z}_{3}$ (see~\cite[p.~304]{Ro03}). It is easy to see that~$3_1\in\Sf_1^{\ra{1}{0}}(\U)$.
This argument completes the proof.
\end{proof}

\begin{proof}[Proof of Theorem~\ref{thm2}]

Let us show that for any $n\in\mathbb{N}$ there is a knot $K_n$ in~$\Sf_1^{\Cm(n)}(\U)$ such that $|\Delta_{K_n}(-1)|>1$.
Lemma~\ref{card} shows that in this case we have $\e_2(K_{n})\ge1$. By \nameref{basiclem}, this implies that 
the number of \mbox{$\BU$-ends} of each connected component of $\G(\Cm(n),\bK)$ is equal to one for any $n\in \mathbb{N}$. 

\begin{figure}[H]
\center{\includegraphics[width=0.75\textwidth]{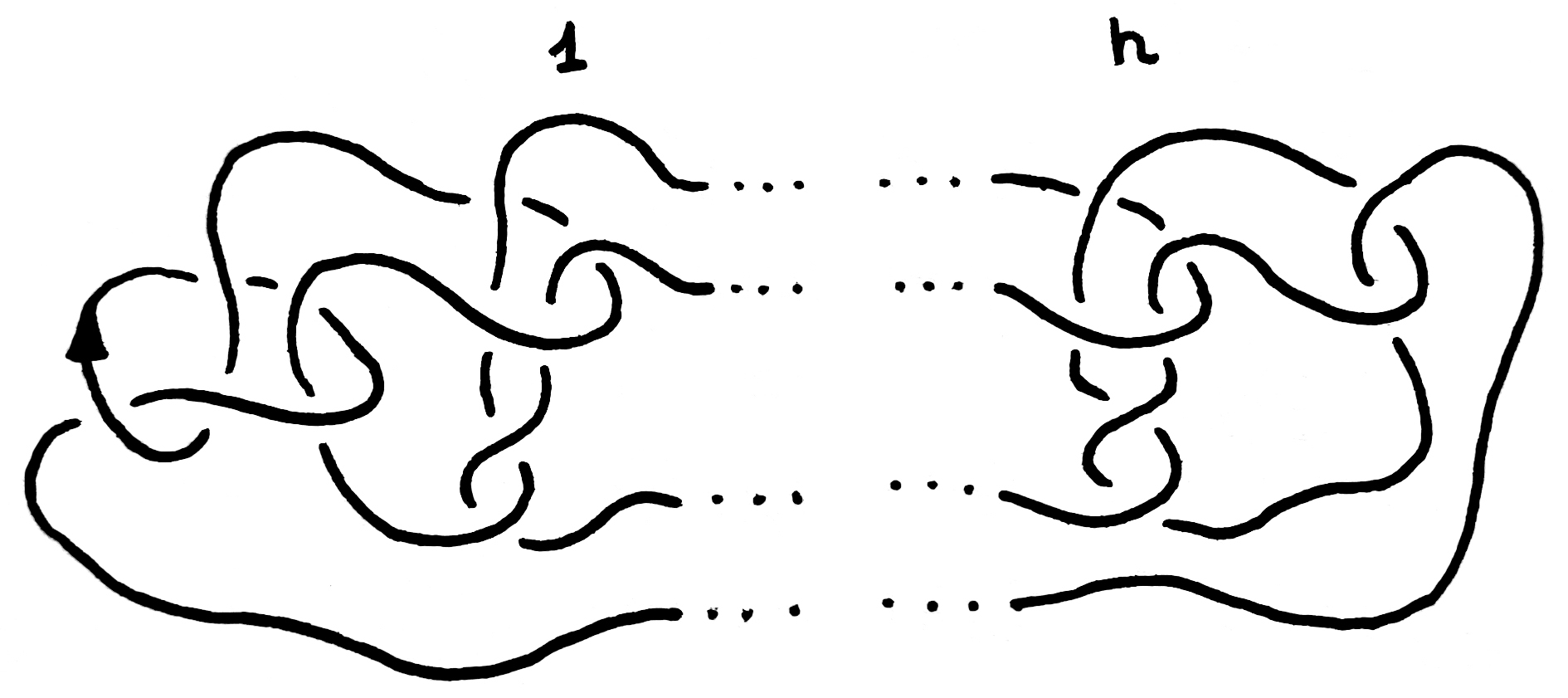}}
\caption{\ti{A family of oriented knots}} 
\label{Fig20}
\end{figure}

\medskip
Let $K_n$ be such a family of oriented knots in $S^3$ as shown in Figure~\ref{Fig20} for~$n\in\mathbb{N}$. Let us calculate the Conway polynomial of $K_n$. For $n=1$ it is easy to see that~$\nabla(K_1)(z)=-z^6-z^4+1$. For any $n>1$ we have
$$\nabla(K_n)(z)=1-z\nabla(K_n^1)(z), \,\,\, \nabla(K_n^1)(z)=\nabla(K_n^2)(z)-z,$$
$$\nabla(K_n^2)(z)=0+z\nabla(K_n^3)(z),$$
where $K_n^1$, $K_n^2$, and $K_n^3$ are the links shown in Figure~\ref{Fig21} (we only show the part of each of these links that changes during the calculation).

\begin{figure}[H]
\center{\includegraphics[width=0.8\textwidth]{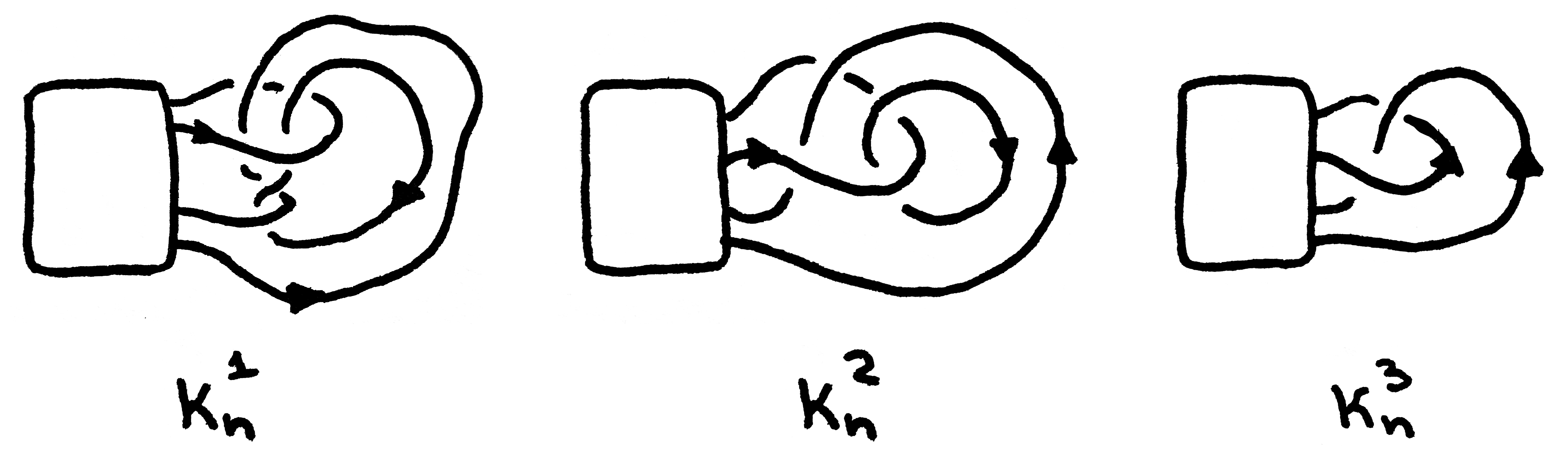}}
\caption{\ti{Intermediate calculations}} 
\label{Fig21}
\end{figure}

Note that $K_n^3$ is ambient isotopic to $K_{n-1}$. Then we have
$$\nabla(K_n)(z)=1-z\nabla(K_n^1)(z)=1-z(\nabla(K_n^2)(z)-z)=$$
$$=1-z(z\nabla(K_{n-1})(z)-z)=1+z^2-z^2\nabla(K_{n-1})(z).$$
By Lemma~\ref{changev} and by forgetting the orientation, we obtain
$$\Delta_{K_n}(-1)=\Delta_{K_n}(i^2)=\nabla(K_n)(2i)=-3+4\nabla(K_{n-1})(2i).$$
Note that $|\Delta_{K_1}(-1)|=49$. Hence, by induction we have $|\Delta_{K_n}(-1)|>1$ for any $n\in \mathbb{N}$. It is easy to see that \mbox{$K_n\in\Sf_1^{\Cm(n)}(\U)$} for any $n\in\mathbb{N}$. This argument completes the proof.
\end{proof}


\begin{proof}[Proof of Theorem~\ref{thm3}]
It is easy to see that $e_2(3_1)=1$, see~\cite[p.~304]{Ro03}. Figure~\ref{Fig22} shows that $3_1\in\Sf_1^{\Hm(n)}(\U)$ for any $n\in\mathbb{N}$. By \nameref{basiclem}, this implies that 
the number of $\BU$-ends of each connected component of $\G(\Hm(n),\bK)$ is equal to one for any $n\in \mathbb{N}$. This argument completes the proof.
\end{proof}

\begin{figure}[H]
\center{\includegraphics[width=0.65\textwidth]{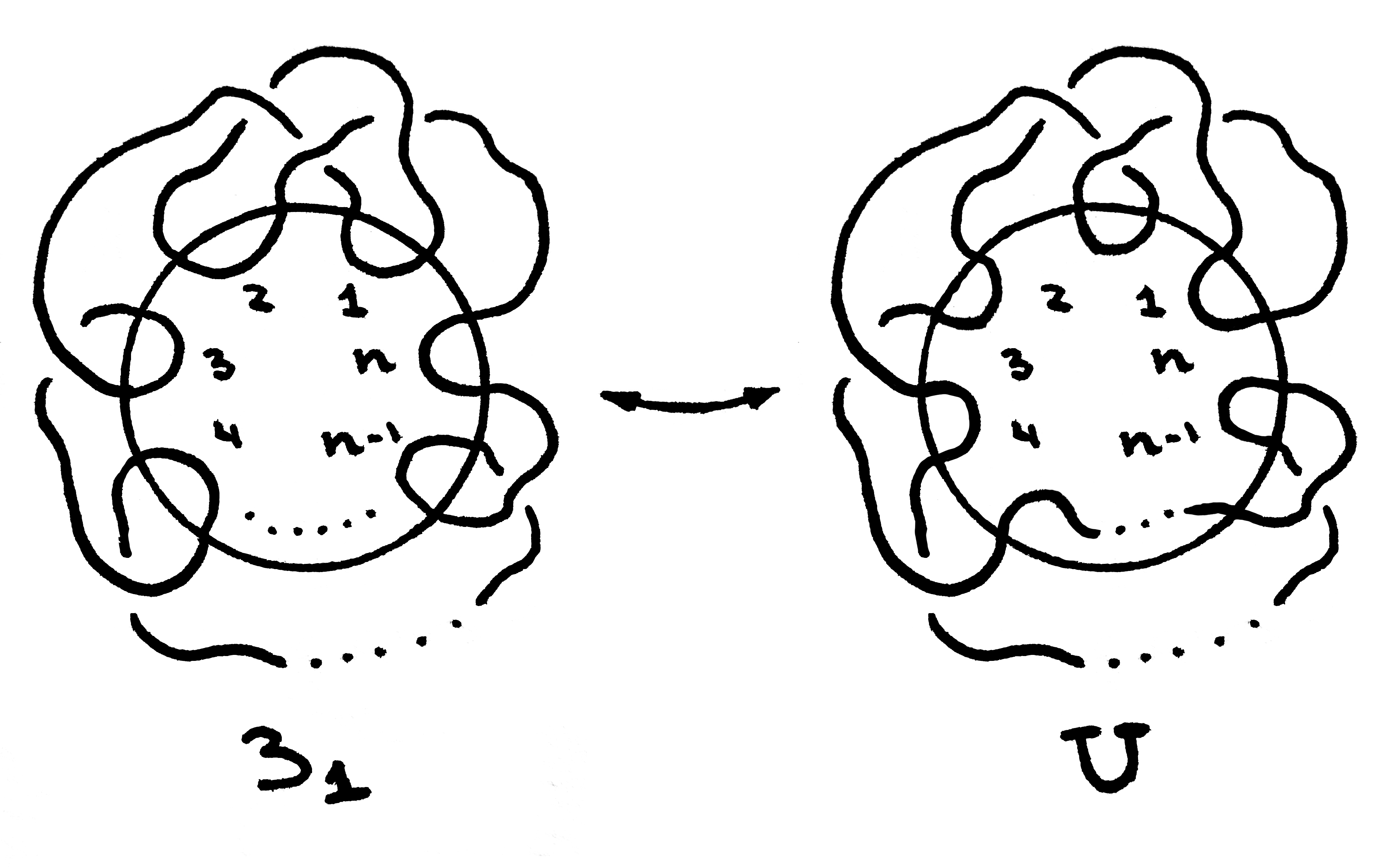}}
\caption{\ti{$3_1\in\Sf_1^{\Hm(n)}(\U)$}} 
\label{Fig22}
\end{figure}


\begin{proof}[Proof of Theorem~\ref{thm4}]
Let $X$ be a finite subset of $\Vrt(\G(\delta,\bK))$, and let $C$ be a connected component of $\G(\delta,\bK)$. Let us show that \mbox{$C\sm (X\cap\Vrt(C))$} is a connected graph. This implies that 
the number of $\FI$-ends and the number of~$\FU$-ends of each connected component of $\G(\delta, \bK)$ are equal to one.

\medskip
Let $K,S\in \Vrt(C)\sm (X\cap\Vrt(C))$, and let $\gamma$ be a path in $C$ connecting $K$ and $S$. Note that $$A=\{Q\in\Vrt(\G(\delta,\bK))\,\,|\,\,Q\in\Sf_1^{\delta}(\U),\,Q\notin X\}$$ is an infinite set. Therefore there is a vertex $Q\in A$ such that \mbox{$\Vrt(\gamma(Q))\cap X=\varnothing$,} where $\gamma(Q)$ is the path obtained by "shifting" $\gamma$ by $Q$. Let $e$ be an edge incident to vertices $\U$ and $Q$, and let $e(K)$ and $e(S)$  be edges such that $e(K)$ is the edge obtained by "shifting" $e$ by $K$, $e(S)$ is the edge obtained by "shifting" $e$ by $S$, and $e(K)\cup\gamma(Q)\cup e(S)$ is a path connecting $K$ and $S$ (such "shifts" can always be obtained by choosing an appropriate gluing homeomorphism). Since no vertex of~$e(K)\cup\gamma(Q)\cup e(S)$ lies in $X$, we can assume that it connects $K$ and $S$ as vertices of $C\sm (X\cap\Vrt(C))$. This argument completes the proof.
\end{proof}
\begin{remark}
Note that $\Sf_1^{\Cm(n)}(\U)$ and $\Sf_1^{\Hm(n)}(\U)$ are infinite sets for any $n\in\mathbb{N}$, see~\cite{O06} and~\cite{ZYL17} .
\end{remark}


\end{document}